\theoremstyle:=definition,remark,plain\do{%
        \expandafter\g@addto@macro\csname th@\theoremstyle\endcsname{%
            \addtolength\thm@preskip\parskip
            }%
        }
\newtheorem{theorem}{Theorem}
\newtheorem*{theorem*}{Theorem}
\newtheorem{lemma}[theorem]{Lemma}
\newtheorem{defs}[theorem]{Definition}
\newtheorem{corollary}[theorem]{Corollary}
\newtheorem{question}[theorem]{Question}
\newtheorem*{claim*}{Claim}
\newtheorem*{quest*}{Question}
\theoremstyle{definition}
\newtheorem*{remark*}{Remark}
\newtheorem{obs}{Observation}
\newcommand{\size}[1]{\left| #1 \right|}
\providecommand*{\cupdot}{%
  \mathbin{%
    \mathpalette\@cupdot{}%
  }%
}
\newcommand*{\@cupdot}[2]{%
  \ooalign{%
    $\m@th#1\cup$\cr
    \hidewidth$\m@th#1\cdot$\hidewidth
  }%
}
\title{Transversal numbers of stacked spheres}
\author{Minho Cho$^1$}
\address[Minho Cho]{Extremal Combinatorics and Probability Group, IBS, Daejeon, South Korea}
\email{minhocho@ibs.re.kr}
\author{Jinha Kim$^2$}
\address[Jinha Kim]{Discrete Mathematics Group, IBS, Daejeon, South Korea}
\email{jinhakim@ibs.re.kr}
\thanks{$^1$ Minho Cho was supported by the Institute for Basic Science~(IBS-R029-C4).}
\thanks{$^2$ Corresponding author. Jinha Kim was supported by the Institute for Basic Science (IBS-R029-Y5).}
\thanks{Part of this work was conducted when the first author was a Ph.D student at KAIST and included in his dissertation~\cite{Cho2023}.}
\date{\today}
\begin{document}

\maketitle

\begin{abstract}
A stacked $d$-sphere $S$ is the boundary complex of a stacked $(d+1)$-ball, which is obtained by taking cone over a free $d$-face repeatedly from a $(d+1)$-simplex.
A stacked sphere $S$ is called linear if every cone is taken over a face added in the previous step.
In this paper, we study the transversal number of facets of stacked $d$-spheres, denoted by $\tau(S)$, which is the minimum number of vertices intersecting with all facets.
Briggs, Dobbins and Lee showed that the transversal ratio of a stacked $d$-sphere is bounded above by $\frac{2}{d+2}+o(1)$ and can be as large as $\frac{2}{d+3}$.
We improve the lower bound by constructing linear stacked $d$-spheres with transversal ratio $\frac{6}{3d+8}$ and general stacked $d$-spheres with transversal ratio $\frac{2d+3}{(d+2)^2}$.
Finally, we show that $\frac{6}{3d+8}$ is optimal for linear stacked $2$-spheres, that is, the transversal ratio is at most $\frac{3}{7} + o(1)$ for linear stacked $2$-spheres.
\end{abstract}

\section{Introduction}
A {\em hypergraph} on $V$ is a collection of subsets of $V$.
For a hypergraph $H$ on $V$, we say $V=V(H)$ is the {\em vertex set} of $H$ and each element $e \in H$ is called an {\em edge} of $H$.
We say a vertex set $T \subset V$ is a {\em transversal} of $H$ if $e \cap T \neq \emptyset$ for every edge $e \in H$ and we define the {\em transversal number} $\tau(H)$ of $H$ as the minimum size of a transversal of $H$.
In general, the transversal number of a hypergraph can be as large as the size of the vertex set up to a constant difference: for instance, if $H=\binom{V}{d}$, then $\tau(H) = |V|-d+1$.
When we have a given class of hypergraphs, it is natural to ask how large the ``transversal ratio'' $\tau(H) / |V(H)| \in (0,1)$ can be over all hypergraphs $H$ in the class.

Among various classes of hypergraphs, we are mainly interested in facet hypergraphs of polytopes and their transversal numbers.
(For more references related to transversals of hypergraphs from geometry, see~\cite{AKMM02, GPW}.)
Let $P \subset \mathbb{R}^d$ be a $d$-dimensional polytope with the vertex set $V=V(P)$.
A {\em face} of $P$ is given as $h \cap P$ for some hyperplane $h$ such that one of the two closed half-spaces of $h$ contains $P$.
A {\em facet} of $P$ is a $(d-1)$-dimensional face of $P$.
We define the {\em facet hypergraph} of $P$ as the hypergraph on $V$ whose edges are the vertex sets of the facets of $P$.

Let $\mathcal{H}_d$ be the class of all facet hypergraphs of $d$-dimensional polytopes.
Then how large can the transversal ratio $\tau(H)/|V(H)|$ be over all $H \in \mathcal{H}_d$?
It is known that $\tau(H) \le |V(H)|/2$ for all $H \in \mathcal{H}_3$, which can be easily verified using the $4$-color theorem. \cite{BDL21+}
However, if $d \ge 4$, then answering this question seems challenging.
It is even unknown whether the transversal ratio can be bounded away from $1$ for any $d \ge 4$.
\begin{question}[Question 1, \cite{BDL21+}]\label{q:general}
Let $d \ge 4$ be an integer.
How large can the transversal ratio ${\tau(H)}/{n}$ be for sufficiently large $n$ and $H \in \mathcal{H}_d$ on $n$ vertices?
\end{question}

To answer Question~\ref{q:general}, we can ask whether there is a constant $c=c(d)<1$ such that for sufficiently large $n$ and every $H \in \mathcal{H}_d$ on $n$ vertices, $\tau(H) \le cn$.
A quick reduction is that to find such a constant $c(d)$, it suffices to consider only \emph{simplicial polytopes} whose facets are all simplices. 
This is because for a given polytope $P$, by moving some vertices of $P$ slightly towards its interior, we get a simplicial polytope whose facet hypergraph is isomorphic to a refinement of that of $P$.

In \cite{BDL21+}, the authors studied the transversal ratio for classes of simplicial polytopes.
As a special class of simplicial polytopes, they also studied the class of ``stacked balls''. (Some literature, for instance \cite{MRS}, refers to these as stacked polytopes.)

A {\em simplicial complex} $K$ on $V$ is a collection of subsets of $V$ that is closed under taking subsets, and each element in $K$ is called a {\em face} of $K$ and a {\em facet} of $K$ is a maximal face of $K$.
A face $\sigma \in K$ is called {\em free} if there is an unique facet containing $\sigma$ in $K$.
The {\em dimension} of a face $\sigma \in K$ is $\dim(\sigma)=|\sigma|-1$ and the {\em dimension} of $K$ is the maximum dimension of its faces.
We say a simplicial complex $K$ on $V$ is a {\em simplex} if $K=2^V$ and a $d$-dimensional simplex is just called a {\em $d$-simplex}.
When $\sigma$ is a set with $d+1$ vertices, then we denote by $\overline{\sigma}$ the $d$-simplex on $\sigma$.
The {\em facet hypergraph} $H(K)$ of a simplicial complex $K$ on $V$ is a hypergraph on $V$ whose edges are the facets of $K$.
\[H(K):=\{\sigma \subset V: \sigma \text{ is a facet of }K\}.\]
The {\em transversal number} $\tau(K)$ of $K$ is the transversal number of its facet hypergraph, i.e. $\tau(K)=\tau(H(K))$.

A {\em stacked $d$-ball} is a $d$-dimensional simplicial complex that can be obtained from a $d$-simplex by repeatedly gluing another $d$-simplex onto its $(d-1)$-dimensional free faces.
More precisely, it can be described as follows.

\begin{defs}
A simplicial complex $B$ is called a stacked $d$-ball if there is a sequence of simplicial complexes $B_1,\dots,B_m=B$ and a sequence of $d$-simplices $(\sigma_1,\dots,\sigma_m)$ satisfying the following:
\begin{enumerate}[(i)]
    \item $B_1=\sigma_1$.
    \item For each $i \in [m-1]$, $B_{i+1}=B_i \cup \sigma_{i+1}$ and $B_i \cap \sigma_{i+1}=\eta_{i+1}$ is a simplex on a $(d-1)$-dimensional free face of $B_i$.
\end{enumerate}
We denote by $\Gamma(B)=(\sigma_1,\dots,\sigma_m)$ the sequence of $d$-simplices constructing $B$.
\end{defs}

Let $B$ be a stacked $(d+1)$-ball where $\Gamma(B)=(\sigma_1,\dots,\sigma_m)$ for some $(d+1)$-simplices $\sigma_1,\dots,\sigma_m$.
Then we have $|V(B)|=d+m+1$.
We can consider the $d$-dimensional simplicial complex $S$ on $V(B)$ where the facets of $S$ are the $d$-dimensional free faces of $B$, and we call $S$ is a {\em stacked $d$-sphere} and denote $S=\partial B$.
If $\Gamma(B)=(\sigma_1,\dots,\sigma_m)$ and $S=\partial B$, then we also write $S=\partial(\sigma_1,\dots,\sigma_m)$.
Note that a stacked $(d+1)$-ball can be considered as a $(d+1)$-dimensional polytope, and a stacked $d$-sphere is a special case of a {\em simplicial $d$-sphere}, which is a simplicial complex homeomorphic to the $d$-dimensional sphere.

Now, we define the {\em dual graph} $G(B)$ of a stacked $(d+1)$-ball $B$ where $\Gamma(B)=(\sigma_1,\dots,\sigma_m)$.
The dual graph $G(B)$ is a graph on the vertex set $[m]$ and two vertices $i,j \in [m]$ are adjacent if and only if $\sigma_i \cap \sigma_j$ is a $d$-simplex.
Equivalently, when we construct $B$, we add the edge $\{i,j\}$ for $1 \le i<j \le m$ to $G(B)$ if $\sigma_j$ is attached to $\sigma_i$, i.e. $\eta_j \subset \sigma_i$.
Note that by the definition of a stacked ball, $G(B)$ is a tree for every stacked ball $B$, that is, $G(B)$ is connected and does not contain a cycle.
Now, as a special case, we say a stacked $d$-sphere $S=\partial B$ is {\em linear} if $G(B)$ is a path.

\begin{figure}[htbp]
    \centering
    \includegraphics[scale=0.175]{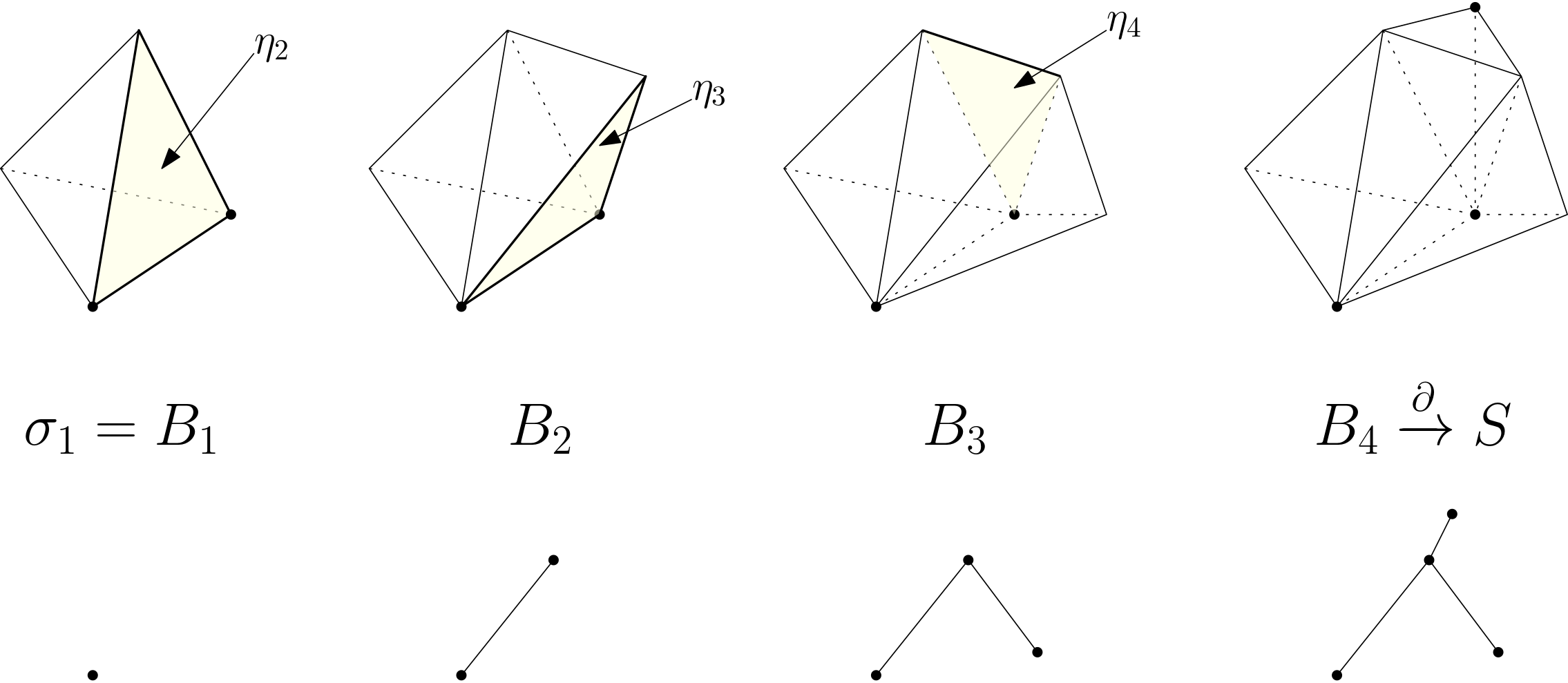}
    \caption{Construction of a stacked 2-sphere $S$ and its dual graph.}
    \label{fig_1}
\end{figure}

Our main concern is the following question.
\begin{question}[Question 3, \cite{BDL21+}]\label{que:ratio}
What are the following values for a fixed $d \ge 2$?
\begin{enumerate}
    \item $\displaystyle T(d):=\limsup_{n\to\infty}\max\left\{\frac{\tau(S)}{n}: S\text{ is a stacked }d\text{-sphere on }n\text{ vertices}\right\}$.
    \item $\displaystyle T^{\text{path}}(d):=\limsup_{n\to\infty}\max\left\{\frac{\tau(S)}{n}: S\text{ is a linear stacked }d\text{-sphere on }n\text{ vertices}\right\}$.
\end{enumerate}
\end{question}

Briggs et al.~\cite{BDL21+} showed that $T(d) \le \frac{2}{d+2}$ for every $d \ge 2$.
They also constructed an infinite family of linear stacked $d$-spheres that shows $T^{\text{path}}(d) \ge \frac{2}{d+3}$, and asked whether this is the optimal value.
We figured out that this is actually not true, by constructing an infinite family of linear stacked $d$-spheres showing $T^{\text{path}}(d) \ge \frac{6}{3d+8}$.
\begin{theorem}[Lower bound]\label{thm:lowerpath}
Let $d \ge 2$ and $k \ge 1$ be integers.
There exists a linear stacked $d$-sphere $S_d$ on $(3d+8)k$ vertices such that $\tau(S_d) \ge 6k$.
\end{theorem}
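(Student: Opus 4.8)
The plan is to build $S_d$ by concatenating $k$ essentially identical \emph{gadgets} along a path, and to derive $\tau(S_d)\ge 6k$ from a pigeonhole argument over the gadgets; the substance of the proof is a local lower bound for a single gadget.

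\textbf{The gadget and the concatenation.} First I would fix a gadget $Q$: a segment of a linear stacked $(d+1)$-ball obtained from a $d$-simplex $\eta$ (its ``input facet'') by a prescribed sequence of stackings, always coning over a face created one step earlier so that linearity is preserved. I would design $Q$ so that (i) it creates exactly $3d+8$ new vertices; (ii) it has a free \emph{output facet} $\eta'$ all of whose $d+1$ vertices are new --- attainable after a few ``descending'' stackings that peel off the vertices of $\eta$ one at a time, ending at a facet disjoint from $\eta$; and (iii) the family of facets of $S_d$ that are created inside $Q$ and avoid $\eta'$ has transversal number at least $6$. Then $B_d$ is built by chaining $Q_1,\dots,Q_k$: the first simplex of $Q_{i+1}$ is glued onto the output facet $\eta'$ of $Q_i$, and $Q_1$ starts from a $(d+1)$-simplex. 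Because $\eta'$ uses only vertices that are new in $Q_i$, the gadgets are \emph{spread out}: $V(Q_i)\cap V(Q_j)=\emptyset$ whenever $|i-j|\ge 2$, while $V(Q_i)\cap V(Q_{i+1})$ is the $(d+1)$-vertex output facet of $Q_i$. Setting $S_d:=\partial B_d$, the dual graph is a path, so $S_d$ is a linear stacked $d$-sphere. Put $P_i:=V(Q_i)\setminus V(Q_{i+1})$ for $i<k$ and $P_k:=V(Q_k)$; these sets partition $V(S_d)$, and by (i) each $P_i$ has exactly $3d+8$ vertices (after a harmless adjustment of the first and last gadgets to absorb the boundary discrepancy), so $|V(S_d)|=(3d+8)k$.

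\textbf{The local lemma (the main obstacle).} The crux is to prove that for every $i$ the hypergraph $H_i:=\{F\in H(S_d):F\subseteq P_i\}$ satisfies $\tau(H_i)\ge 6$. By the spreading property, $P_i=V(Q_i)\setminus\eta'$, so $H_i$ is exactly the family in (iii), and the claim becomes a purely local statement about the gadget. I expect this to be the hard part: one must make the stacking sequence completely explicit, track which $(d+1)$-sets survive as facets of $S_d$ after \emph{all} the stackings (those inside $Q_i$ and the single external stacking of $Q_{i+1}$ onto $\eta'$), and then carry out a case analysis showing that no $5$-subset of $P_i$ meets every facet in $H_i$ --- in particular ruling out that a ``hub'' vertex shared by several sub-configurations of the gadget can substitute for two independently chosen vertices, and that the handful of auxiliary ``glue'' vertices do not create an unexpectedly cheap transversal. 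This step also pins down the constant: a marginally less efficient gadget only gives $\frac{6}{3d+9}=\frac{2}{d+3}$, and saving one vertex per unit is precisely what produces $\frac{6}{3d+8}$.

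\textbf{The pigeonhole step.} Granting the local lemma, let $T$ be any transversal of $S_d$ and suppose for contradiction that $|T|\le 6k-1$. Since $P_1,\dots,P_k$ partition $V(S_d)$, we have $\sum_{i=1}^{k}|T\cap P_i|=|T|\le 6k-1$, so $|T\cap P_i|\le 5$ for some $i$. Every $F\in H_i$ satisfies $F\subseteq P_i$, hence any vertex of $T$ meeting $F$ already lies in $T\cap P_i$; thus $T\cap P_i$ is a transversal of $H_i$ of size at most $5$, contradicting $\tau(H_i)\ge 6$. Therefore $\tau(S_d)\ge 6k$, which is the claim (the case $k=1$ is the local lemma itself, with $P_1=V(S_d)$).
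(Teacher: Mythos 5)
Your global skeleton---chain $k$ gadgets along a path, partition the vertex set into $k$ blocks of size $3d+8$, prove a local transversal bound of $6$ per block, and finish by pigeonhole---is essentially the paper's strategy (the paper packages the chaining as a gluing statement, Corollary~\ref{cor:gluing-path}, and the summation over disjoint blocks is the same argument). The pigeonhole step is correct. The problem is that the entire mathematical content of the theorem sits in the step you label ``the main obstacle'' and then do not carry out: you never exhibit a gadget, never write down its stacking sequence, and never prove that its internal facets have transversal number at least $6$. That local lemma is precisely where the improvement from the previously known $\frac{2}{d+3}=\frac{6}{3d+9}$ to $\frac{6}{3d+8}$ is earned, so as written the proposal establishes nothing beyond the known framework. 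For comparison, the paper's proof is almost entirely this computation: it takes $\alpha_i=\{i,i+1,\dots,d+i+1\}$ for $i\in[2d+7]$, so that $S=\partial(\overline{\alpha}_1,\dots,\overline{\alpha}_{2d+7})$ lives on $[3d+8]$, deletes the two end facets $f=\alpha_1\setminus\{d+1\}$ and $g=\alpha_{2d+7}\setminus\{2d+8\}$, and shows by a case analysis that no $5$-set meets every remaining facet: two transversal vertices are forced into $[d+3]$, two into $\{2d+6,\dots,3d+8\}$, leaving at most one for the middle block $\{d+4,\dots,2d+5\}$, which then forces $d+2,d+3\in T$ and leaves the facet $[d+1]$ uncovered.

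A secondary point about your reduction itself: your local hypergraph $H_i$ consists only of facets contained in $P_i=V(Q_i)\setminus\eta'$, i.e.\ facets avoiding all $d+1$ vertices of the shared output face, whereas the paper only needs the bound for $S$ with exactly two facets $f,g$ deleted---facets that merely touch the glue vertices are retained, because consecutive copies are kept fully vertex-disjoint and joined by bridge simplices spanned by $f\cup g$ that introduce no new vertices. Your version of the local lemma is therefore a strictly stronger requirement on the gadget than what the gluing actually demands, which makes the missing construction harder rather than easier. Either adopt the disjoint-copies-plus-bridges gluing (as in Lemma~\ref{lem:gluing-gen}) so that only two facets per copy are lost, or verify the stronger statement for an explicitly specified gadget; in both cases the explicit case analysis cannot be avoided.
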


We also proved that the bound $\frac{6}{3d+8}$ is actually optimal for linear stacked $2$-spheres, that is, $T^{\text{path}}(2)= \frac{3}{7}$.
\begin{theorem}[Upper bound]\label{thm:upper}
Let $n \ge 4$ be an integer and let $S$ be a linear stacked $2$-sphere on $n$ vertices. Then there is a transversal of $S$ with at most $\lceil\frac{3n}{7}\rceil$ vertices.
\end{theorem}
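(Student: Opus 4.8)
The plan is to translate the problem into a one‑dimensional combinatorial model along the dual path and then run an induction that peels off a bounded block of tetrahedra at a time.

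\emph{A combinatorial model.} After relabelling so that $\sigma_{i+1}$ is glued to $\sigma_i$ for all $i$, write $A_i:=\eta_i$ for the attaching triangle of $\sigma_i$ (a $3$-set, with $\sigma_i=A_i\cup\{v_i\}$ for the apex $v_i$ of $\sigma_i$), and let $w$ be the vertex of $\sigma_1$ outside $A_2$. Since every free triangle of $\sigma_i$ contains $v_i$, one gets $A_{i+1}=(A_i\setminus\{r_i\})\cup\{v_i\}$ for a unique $r_i\in A_i$, the vertex ``retired'' at step $i$. Here $n=m+3$, and the facet hypergraph of $S$ consists of the three triangles $\{w\}\cup\binom{A_2}{2}$; for each internal $i$ (that is, $2\le i\le m-1$) the two triangles $\{v_i,r_i,x\}$ with $x\in A_i\setminus\{r_i\}$, both containing the pair $\{v_i,r_i\}$; and the three triangles $\{v_m\}\cup\binom{A_m}{2}$. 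Hence $T\subseteq V(S)$ is a transversal if and only if: (a) $w\in T$ or $|T\cap A_2|\ge 2$; (b) for every internal $i$, $v_i\in T$, or $r_i\in T$, or $A_i\cap A_{i+1}\subseteq T$; (c) $v_m\in T$ or $|T\cap A_m|\ge 2$.

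\emph{Long runs are cheap.} Say $i\ge 3$ is a \emph{break} if $r_i\ne v_{i-1}$; together with $i=2$, the breaks cut $\{2,\dots,m-1\}$ into $k$ \emph{runs}, and on a run the pair $A_i\cap A_{i+1}$ is constant. Carrying this pair through each run and exchanging one of its two vertices at each break produces a transversal of size at most $k+1$: on a run the common pair satisfies (b) at every position via its third clause, the first run's pair lies in $A_2$ (so (a) holds), and the last run's pair lies in $A_m$ (so (c) holds). Since every run has length at least $1$, if all runs have length at least $3$ then $k\le\frac{m-2}{3}$ and $|T|\le\frac{m-2}{3}+1\le\frac{3n}{7}$. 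The entire difficulty is therefore concentrated in long stretches of consecutive breaks (runs of length $1$ or $2$) — precisely the mechanism behind the lower‑bound family of Theorem~\ref{thm:lowerpath}.

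\emph{A peeling induction.} I would strengthen the statement to carry a boundary datum. For a linear stacked $2$-sphere $S'$ on $n'$ vertices with a distinguished facet $\eta$ that is the union of the apex of its last tetrahedron with two older vertices, produce a transversal $T'$ of $S'$ together with a record of finitely many features of $T'$ near $\eta$ (at least the pattern $T'\cap\eta$), satisfying a bound of the form $|T'|+\mathrm{pen}(\text{pattern})\le\big\lceil\tfrac{3n'}{7}\big\rceil+O(1)$, where $\mathrm{pen}$ vanishes on the most favourable pattern (say $|T'\cap\eta|\ge 2$) and is positive otherwise. Given $S$, inspect the last window $\sigma_{m-p+1},\dots,\sigma_m$, with $p$ at most a constant and chosen according to the break pattern near the end of the dual path; apply the hypothesis to $S'=\partial(\sigma_1,\dots,\sigma_{m-p})$ with $\eta=\eta_{m-p+1}$; then, using the recorded pattern of $S'$ together with the (finitely many) possible internal structures of the window, add a set $W$ of new vertices hitting both the $2p+1$ window facets of $S$ and the old distinguished facet, with $|W|-\mathrm{pen}(\text{old pattern})\le\big\lceil\tfrac{3p}{7}\big\rceil$, and record the new pattern. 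Telescoping yields $\tau(S)\le\big\lceil\tfrac{3n}{7}\big\rceil$, with small $n$ treated as base cases.

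\emph{Main obstacle.} Because the lower‑bound construction attains $\tfrac{3n}{7}$ exactly, the induction must close with no slack: the real task is to pin down the correct finite list of boundary patterns and penalties and the correct rule for the window length $p$, and then to carry out a finite but sizeable case analysis (window structures $\times$ boundary patterns) showing that a long run of consecutive breaks can always be cut at a place where it is ``cappable'' by a $\tfrac37$-fraction of fresh vertices. Equivalently, an optimal transversal is computed by a shortest‑path dynamic program along the dual path whose state is $T\cap A_i$, and the theorem is the assertion that this program admits a potential/charging scheme of amortised cost at most $\tfrac37$ per tetrahedron; the same case analysis is what must be verified. I expect this case analysis, together with the bookkeeping needed to keep the ceiling‑function bound tight, to be the crux.
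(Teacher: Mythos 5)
Your combinatorial model is correct (the translation of the facet hypergraph into conditions (a)--(c) along the dual path checks out), and the ``long runs are cheap'' step is sound: consecutive run-pairs share a vertex, so the union of the $k$ run-pairs is a transversal of size at most $k+1$, which beats $\tfrac{3n}{7}$ when every run has length at least $3$. But from that point on the proposal is a plan rather than a proof, and the part that is missing is precisely the quantitative content of the theorem. You never specify the finite list of boundary patterns, the penalty function $\mathrm{pen}$, or the rule for choosing the window length $p$, and you state the inductive invariant only up to an unquantified $+O(1)$ --- which, unless the constant is pinned down and absorbed, does not yield the stated bound $\lceil\tfrac{3n}{7}\rceil$ (it would only give $T^{\text{path}}(2)=\tfrac37$ asymptotically). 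You explicitly flag the case analysis for long stretches of consecutive breaks as ``the crux'' that remains to be verified; since the lower-bound family meets $\tfrac{3n}{7}$ exactly, there is no slack, and it is exactly this tight amortization that constitutes the theorem. As written, the argument would not be accepted as a proof.

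For comparison, the paper closes this gap by choosing a concrete strengthening and a fixed window. The strengthened inductive statement is: there exist \emph{two} transversals $T_1,T_2$ of $S$, each of size at most $\lceil\tfrac{3n}{7}\rceil$, whose union contains at least $3$ vertices of the last tetrahedron $\sigma_{n-3}$ (this plays the role of your ``boundary datum'', with no penalty function needed). The window is always $7$ tetrahedra ($7$ new vertices), and Lemma~\ref{l:S} is the finite case analysis you anticipate: for any $2$-subset $L$ of the first attaching triangle of a $10$-vertex linear stacked $2$-sphere, one can find two transversals of size at most $4$, each meeting $L$ and together covering $3$ vertices of the final tetrahedron; gluing then costs $3$ vertices per $7$ new vertices. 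If you want to complete your approach, you would need to produce an analogue of this lemma --- i.e., actually carry out the window-times-boundary-pattern case analysis and verify that the amortized cost never exceeds $\tfrac37$ with zero additive slack.
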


If we consider general stacked $d$-spheres, then Theorem~\ref{thm:lowerpath} also gives a lower bound $\frac{6}{3d+8}$ of $T(d)$ since $T(d) \ge T^{\text{path}}(d)$.
We could find a better bound, by constructing an infinite family of stacked $d$-spheres showing $T(d) \ge \frac{2d+3}{(d+2)^2}$ for $d \ge 3$ and $T(2) \ge \frac{6}{13}$.
Note that this implies that $T(2)$ is strictly larger than $T^{\text{path}}(2)=\frac{3}{7}$.

\begin{theorem}[Lower bound]\label{thm:lowergeneral}
Let $d \ge 2$ and $k \ge 1$ be integers.
\begin{enumerate}
    \item There exists a stacked $d$-sphere $S_2$ on $13k$ vertices such that $\tau(S_2) \ge 6k$.
    \item There exists a stacked $d$-sphere $S_d$ on $(d+2)^2k$ vertices such that $\tau(S_d) \ge (2d+3)k$.
\end{enumerate}
\end{theorem}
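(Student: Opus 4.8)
Both parts will be proved from a common template. First I would design a small \emph{gadget}: a stacked $d$-sphere $G$ with two marked, vertex-disjoint facets $L,R$ that are far apart in the dual tree, and whose facet hypergraph has large transversal number relative to $|V(G)|$. Then I would assemble $S_d$ as an iterated connected sum $G^{(1)}\#\cdots\#G^{(k)}$, where $G^{(i)}\#G^{(i+1)}$ identifies the port $R$ of $G^{(i)}$ with the port $L$ of $G^{(i+1)}$ and deletes both facets; since the dual graph of a connected sum of stacked spheres is obtained by joining two trees by a single edge, $S_d$ is again a stacked $d$-sphere, with the ports $L$ of the first copy and $R$ of the last copy surviving as facets of $S_d$. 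A connected sum merges $d+1$ vertices and removes $2$ facets, so $V(S_d)$ is the union of the $k$ copies' vertex sets overlapping exactly along the $k-1$ interface facets; choosing $|V(G)|$ appropriately and giving the first copy a one-time constant-size modification makes $|V(S_d)|$ equal to exactly $13k$ for part~(1), resp.\ $(d+2)^2k$ for part~(2).

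For the gadget I would work over the boundary $\partial\Delta^{d+1}$ of a $(d+1)$-simplex on $\{v_0,\dots,v_{d+1}\}$, with facets $F_i=\{v_0,\dots,v_{d+1}\}\setminus\{v_i\}$, and stack new simplices onto the $F_i$ in a balanced pattern — in part~(2), on the order of $d+1$ stackings over each $F_i$, introducing $(d+2)(d+1)$ new vertices so that $|V(G)|=(d+2)+(d+2)(d+1)=(d+2)^2$ up to the constant tweaks above; part~(1) is the explicit $d=2$, $13$-vertex case, which I would simply display by listing its facets and dual tree and verifying $\tau=6$ directly. The reason $\tau$ is forced up is local: to hit every facet created over a fixed $F_i$, a transversal must either take an apex of one of the stackings over $F_i$ or spend at least $d$ of the $d+1$ vertices of $F_i$; since any two $F_i$ overlap in $d$ vertices, a transversal that tries to economize by re-using the $v_j$ still cannot satisfy all these local constraints cheaply. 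I would package this as a self-contained lemma on ``stars of stackings over a simplex'' and reduce the gadget analysis to it.

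The core of the proof is the lower bound $\tau(S_d)\ge(2d+3)k$ (resp.\ $\ge 6k$). Let $X$ be any transversal of $S_d$. For each copy $G^{(i)}$ the trace $X_i:=X\cap V(G^{(i)})$ must be a transversal of the hypergraph $H_i$ of all facets of $G^{(i)}$ except the (at most two) interface facets $L,R$, which were deleted and hence impose no constraint across the seam. Using the stars-of-stackings lemma I would establish a \emph{robust} per-copy bound: every transversal of $H_i$ contains at least $2d+3$ vertices, and at least $2d+3$ of them can be attributed to copy $i$ without also being attributed to a neighbor — concretely, one shows any transversal of $H_i$ either has at least $2d+3$ vertices outside $L\cup R$, or else has enough surplus on $L\cup R$ that a fixed share can be charged to each incident side. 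Summing over $i=1,\dots,k$, the overlaps along the interface facets cost only an $O(1)$ deficit in total, absorbed by the constant-size modification of the first copy, so $|X|\ge(2d+3)k$; the same accounting with the $d=2$ gadget yields $|X|\ge 6k$.

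I expect the main obstacle to be precisely this robust per-copy estimate together with clean seam bookkeeping. A naive ``partition the vertices of $S_d$ among the $k$ copies and bound each $|X_i|$ from below'' does not work, because facets straddling two copies are absent from every $H_i$ and a transversal vertex lying on an interface facet may be paid for by either side. Hence the care needed in (i) taking the ports $L,R$ vertex-disjoint and far apart so that no facet of $H_i$ meets both; (ii) ensuring the facets of $H_i$ that do touch a port can still be hit cheaply from the neighbor; and (iii) proving that the ``core'' facets over the $F_i$ force $2d+3$ vertices essentially regardless of what the neighbors supply on the seams. Making (iii) a short structural argument — via the stars-of-stackings lemma and an exchange argument on apices, rather than a long case check — is where most of the work lies; I would keep the upper bound $T(d)\le\frac{2}{d+2}$ of \cite{BDL21+} in view throughout as a consistency check on the constants.
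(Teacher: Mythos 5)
Your high-level architecture (a small gadget with large transversal number, replicated $k$ times by gluing) is the same as the paper's, and your gadget is essentially theirs: a spider of stacking-paths attached to the facets of a $(d+1)$-simplex, with $(d+2)^2$ vertices in total. But there are two genuine gaps. First, the gluing. You glue by connected sum, identifying the $d+1$ vertices of the port facets, which (i) makes the vertex count $k\,|V(G)|-(k-1)(d+1)$ rather than $k\,|V(G)|$, forcing the ad hoc ``modification of the first copy'', and (ii) creates exactly the seam problem you flag: a transversal vertex on an identified facet serves two copies, so the claimed ``robust per-copy bound of $2d+3$ vertices attributable to copy $i$ alone'' is the crux and is nowhere established. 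The paper sidesteps all of this: its gluing lemma keeps the two vertex sets disjoint and joins the spheres by inserting a tube of $d+1$ fresh $(d+1)$-simplices between the two port facets $f$ and $g$; the glued sphere minus one tube facet then contains all facets of $S-\{f\}$ and $T-\{g\}$ on disjoint vertex sets, so $\tau$ is additive with no charging argument, and the vertex count is exactly additive. What you need per gadget is therefore $\tau(G-\{f\})\ge 2d+3$ for a single marked facet $f$, not a two-port robust bound.

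Second, the gadget analysis. Your local dichotomy (``take an apex of a stacking over $F_i$ or spend at least $d$ of the $d+1$ vertices of $F_i$'') is not justified and does not obviously sum to $2d+3$. The actual mechanism is finer: each leg's terminal simplex $\{a^{(i)}_1,\dots,a^{(i)}_{d+2}\}$ must meet the transversal, and if it meets it in exactly one vertex then \emph{two specific} core vertices $\varphi_i(d+1),\varphi_i(d+2)$ are forced into the transversal. The whole bound then hinges on choosing the attaching permutations $\varphi_i$ (e.g.\ $\varphi_i(j)=i+j \bmod (d+2)$) so that these forced pairs are pairwise distinct consecutive pairs in a cyclic order on $[d+2]$; this is what limits the number of ``cheaply covered'' legs to $t-1$ when the core carries $t$ transversal vertices, yielding $t+(t-1)+2(d-t+2)=2d+3$. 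That combinatorial design of the attachments, which is the heart of the proof, is missing from your sketch, and without it the balanced-stacking gadget does not visibly force $2d+3$. Your plan for part (1) (exhibit the $13$-vertex complex and verify $\tau=6$ directly) is fine in principle but is likewise not carried out; note that the paper's $d=2$ gadget needs a slightly different local observation (either $\varphi_i(4)\in T$ or $\{\varphi_i(2),\varphi_i(3)\}\subseteq T$) to reach $6$ rather than the $2d+3=7$ that the general bound would give on $16$ vertices.
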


We remark that there exists simplicial $(d+1)$-polytopes with transversal ratio $\frac{2}{d+2}$ on arbitrary many vertices~\cite{BDL21+}.
Moreover, even-dimensional cyclic polytopes can have transversal ratio arbitrary close to $\frac{1}{2}$~\cite{BDL21+, HPT}.
Some \emph{simplicial spheres} require more vertices to transverse all their facets;
there exists infinitely many simplicial 3-spheres with transversal ratio $\frac{11}{21}$~\cite{BDL21+} and simplicial $2d$-spheres with transversal ratio $\frac{2}{5}-o(1)$~\cite{NZ22+}.
On the other hand, stacked spheres have relatively small transversal number since they are constructed in a more restrictive way.

The rest of paper is organized as follows. In Section~\ref{sec:lower}, we provide explicit constructions showing Theorem~\ref{thm:lowerpath} and Theorem~\ref{thm:lowergeneral} respectively. We also introduce gluing lemmas that can be used to construct an infinite family of stacked spheres. The proof of Theorem~\ref{thm:upper} is presented in Section~\ref{sec:upper}.

\section{Constructions for lower bounds}\label{sec:lower}
In this section, we construct infinite families of stacked $d$-spheres showing Theorem~\ref{thm:lowerpath} and Theorem~\ref{thm:lowergeneral} respectively.
The constructions are obtained by ``gluing'' a number of copies of certain stacked spheres of small size.
We first present lemmas that describe how to glue two stacked spheres.
A similar idea to the following lemma was used in \cite[Section 5]{BDL21+}.

\begin{lemma}\label{lem:gluing-gen}
Let $S,T$ be stacked $d$-spheres with $|V(S)|=m$ and $|V(T)|=n$.
Suppose there are facets $f \in S$ and $g \in T$ such that $\tau(S-\{f\})=k$ and $\tau(T-\{g\})=\ell$. 
Then there is a stacked $d$-sphere $K$ on $m+n$ vertices and a facet $h \in K$ such that $\tau(K-\{h\})\ge k+\ell$.
\end{lemma}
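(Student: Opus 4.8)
\emph{Proof idea.} The plan is to pass to the dual ball picture and glue the two stacked balls along a triangulated prism that ``caps off'' the two chosen facets, so that no vertices are identified. Write $S=\partial B_S$ and $T=\partial B_T$ with $B_S,B_T$ stacked $(d+1)$-balls on disjoint vertex sets $V(S)$, $V(T)$. Then $f=\{a_0,\dots,a_d\}$ is a free $d$-face of $B_S$, contained in a unique $(d+1)$-simplex $\sigma=f\cup\{v\}$ of $B_S$, and $g=\{b_0,\dots,b_d\}$ is a free $d$-face of $B_T$, contained in a unique $(d+1)$-simplex $\rho=g\cup\{u\}$ of $B_T$. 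Let $\pi_j:=\{a_0,\dots,a_j\}\cup\{b_j,b_{j+1},\dots,b_d\}$ for $j=0,\dots,d$; these $(d+1)$-simplices form the standard ``staircase'' triangulation of the prism $\overline{f}\times[0,1]$, with $f$ a $d$-face of $\pi_d$ and $g$ a $d$-face of $\pi_0$. I would set $B_K$ to be the complex obtained from $B_S$ by first attaching $\pi_d,\pi_{d-1},\dots,\pi_0$ in this order and then attaching the simplices of $B_T$; that is, $B_K=B_S\cup\pi_0\cup\dots\cup\pi_d\cup B_T$.

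The first task is to verify that $B_K$ is a stacked $(d+1)$-ball. Start from the given construction sequence of $B_S$; afterwards $f$ is still a free face. Attaching $\pi_d$ glues the $d$-simplex $f$ (free in $B_S$) with the single new vertex $b_d$, and, for $j<d$, attaching $\pi_j$ glues the $d$-simplex $\pi_j\cap\pi_{j+1}=\{a_0,\dots,a_j\}\cup\{b_{j+1},\dots,b_d\}$ with the single new vertex $b_j$; one checks that this $d$-simplex is a free face at the moment of attachment, since the only simplex placed so far containing it is $\pi_{j+1}$ (resp.\ $\sigma$, for $\pi_d$). Thus each step is a legal stacking move, and afterwards $g$ is a free face. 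Finally, re-root the dual tree $G(B_T)$ at the vertex corresponding to $\rho$ and attach the simplices of $B_T$ in a breadth-first order from that root: the first such move glues $g$ with the new vertex $u$, and every subsequent move is legal because it is a legal move in $B_T$ and only involves vertices of $V(T)$, which (apart from $b_0,\dots,b_d$) are all new. Since the prism introduces no vertices outside $V(S)\cup V(T)$ and these sets are disjoint, $|V(B_K)|=m+n$; as every vertex of a stacked ball appears on its boundary sphere, $K:=\partial B_K$ is a stacked $d$-sphere on exactly $m+n$ vertices.

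Next I would read off the facets of $K$, i.e.\ the free $d$-faces of $B_K$. The faces $f$ and $g$ now each lie in exactly two $(d+1)$-simplices ($\sigma$ and $\pi_d$, resp.\ $\rho$ and $\pi_0$), hence are interior; every other free $d$-face of $B_S$ (a facet of $S$ other than $f$) and of $B_T$ (a facet of $T$ other than $g$) remains free; and the remaining facets of $K$ come from the prism, each containing at least one $a_i$ and at least one $b_i$. In particular $h:=\{a_1,\dots,a_d,b_d\}$, a $d$-face of $\pi_d$, is free in $B_K$ and so is a facet of $K$, and it lies in neither $V(S)$ nor $V(T)$. Consequently the facet hypergraph of $K-\{h\}$ contains $S-\{f\}$ as a sub-hypergraph on $V(S)$ and $T-\{g\}$ as a sub-hypergraph on $V(T)$, with $V(S)\cap V(T)=\emptyset$. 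Hence if $X$ is any transversal of $K-\{h\}$, then $X\cap V(S)$ is a transversal of $S-\{f\}$ and $X\cap V(T)$ is a transversal of $T-\{g\}$, so
\[
|X|\ \ge\ |X\cap V(S)|+|X\cap V(T)|\ \ge\ \tau(S-\{f\})+\tau(T-\{g\})\ =\ k+\ell ,
\]
and therefore $\tau(K-\{h\})\ge k+\ell$, as required.

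The only genuine work lies in the middle step: showing that the prism block $\pi_d,\dots,\pi_0$ is attached by honest stacking moves and that the construction of $B_T$ can be re-rooted at $\rho$. This amounts to routine bookkeeping about which already-placed simplices contain a given $d$-simplex, but it is where care is needed, in particular for the claim that $\{a_0,\dots,a_j\}\cup\{b_{j+1},\dots,b_d\}$ is a free face exactly at the step when $\pi_j$ is attached, and that the re-rooted attachment faces of $B_T$ stay free after the prism has been inserted.
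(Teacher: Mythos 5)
Your proposal is correct and follows essentially the same route as the paper: both glue $B_S$ and $B_T$ through a staircase triangulation of the prism over the two removed facets (your $\pi_j$'s are the paper's $\gamma_i$'s up to relabeling), pick $h$ to be a facet of the first prism simplex containing vertices from both sides, and split any transversal of $K-\{h\}$ along the partition $V(S)\cupdot V(T)$. The paper is terser about verifying the stacking moves and the re-rooting of $\Gamma(B_T)$, which you spell out correctly.
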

\begin{proof}
We assume $V(S)$ and $V(T)$ are disjoint.
Let $S=\partial(\alpha_1,\dots,\alpha_{m-d-1})$ and $T=\partial(\beta_1,\dots,\beta_{n-d-1})$ for some $(d+1)$-simplices $\alpha_i,\beta_j$.
We will first show that we may assume that $f \in \alpha_{m-d-1}$ and $g \in\beta_1$ by reordering and relabeling $\alpha_i, \beta_j$'s.
Suppose $g \in \beta_i$ for some $i \in [n-d-1]$.
Let $B$ be the stacked $(d+1)$-ball where $\Gamma(B)=(\beta_1,\dots,\beta_{n-d-1})$.
Consider the dual graph $G(B)$ of $B$.
We can take an ordering $j_1,j_2,\dots,j_{n-d-1}$ of $[n-d-1]$ such that $i=j_1$ and $(\beta_{j_1},\dots,\beta_{j_{n-d-1}})$ defines the same stacked $(d+1)$-ball $B$.
(For instance, take a depth-first search ordering of $G(B)$ starting from $i$.)
Therefore, by reordering and relabeling, we may assume $g \in \beta_1$.
Also assume $f \in \alpha_{m-d-1}$ following the same logic.

Let $f=\{v_1,\dots,v_{d+1}\}$ and $g=\{w_1,\dots,w_{d+1}\}$.
For each $i\in [d+1]$, let $\gamma_i$ be the $(d+1)$-simplex on $\{v_{i},\dots,v_{d+1}\}\cup\{w_1,\dots,w_i\}$.
Let $K$ be the stacked $d$-sphere on $m+n$ vertices defined by $K=\partial(\alpha_1,\dots,\alpha_{m-d-1},\gamma_1,\dots,\gamma_{d+1},\beta_1,\dots,\beta_{n-d-1})$.
Let $h=\gamma_1\setminus\{v_2\}$.
Then $h$ is a facet of $K$.
Since $K-\{h\}$ contains all facets of $S-\{f\}$ and $T-\{g\}$, we have $\tau(K-\{h\})\ge \tau(S-\{f\})+\tau(T-\{g\})=k+\ell$ as desired.
\end{proof}

Using Lemma~\ref{lem:gluing-gen} repeatedly, we can obtain the following corollary.

\begin{corollary}\label{cor:gluing-gen}
Let $S$ be a stacked $d$-sphere with $|V(S)|=n$.
If $\tau(S-\{f\})=m$ for some facet $f$ of $S$, then there is a stacked $d$-sphere with $nk$ vertices and the transversal number at least $mk$ for each integer $k \ge 1$.
\end{corollary}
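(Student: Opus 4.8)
The plan is to iterate Lemma~\ref{lem:gluing-gen} $k-1$ times, building up a chain of $k$ glued copies of $S$. The point is that Lemma~\ref{lem:gluing-gen} does not merely produce a stacked sphere with large transversal number; it produces one together with a distinguished facet $h$ such that $\tau(K - \{h\})$ is large. This is exactly the hypothesis one needs to feed back into the lemma, so the induction goes through cleanly.

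Formally, I would induct on $k$. The base case $k=1$ is the hypothesis: $S$ itself is a stacked $d$-sphere on $n$ vertices with a facet $f$ satisfying $\tau(S-\{f\}) = m$. For the inductive step, suppose we have produced a stacked $d$-sphere $K_{k-1}$ on $n(k-1)$ vertices together with a facet $h_{k-1} \in K_{k-1}$ such that $\tau(K_{k-1} - \{h_{k-1}\}) \ge m(k-1)$. Apply Lemma~\ref{lem:gluing-gen} with $S := K_{k-1}$, $f := h_{k-1}$ (so $k := m(k-1)$ in the notation of the lemma) and $T := S$, $g := f$ (so $\ell := m$). The lemma yields a stacked $d$-sphere $K_k$ on $n(k-1) + n = nk$ vertices and a facet $h_k \in K_k$ with $\tau(K_k - \{h_k\}) \ge m(k-1) + m = mk$. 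Finally, since removing one facet can decrease the transversal number by at most $1$, we get $\tau(K_k) \ge \tau(K_k - \{h_k\}) \ge mk$ (and in fact one only needs $\tau(K_k) \ge \tau(K_k-\{h_k\})$, which is immediate as $K_k - \{h_k\}$ has fewer edges). This completes the induction and hence the proof.

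There is essentially no obstacle here: the corollary is a routine bootstrapping of the lemma, and the only mild subtlety is the bookkeeping observation that Lemma~\ref{lem:gluing-gen}'s \emph{output} (a sphere with a good facet) matches its \emph{input} requirements, so one should state the induction hypothesis in the "with a distinguished facet" form rather than the weaker "$\tau(K) \ge mk$" form. I would also note in passing that the disjointness of vertex sets assumed in Lemma~\ref{lem:gluing-gen} is harmless, since one can relabel vertices at each step before gluing.
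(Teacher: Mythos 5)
Your proposal is correct and is exactly the argument the paper intends: the paper gives no written proof beyond the remark that the corollary follows by applying Lemma~\ref{lem:gluing-gen} repeatedly, and your induction --- carrying along the distinguished facet $h$ so that the lemma's output matches its input, and finishing with $\tau(K_k)\ge\tau(K_k-\{h_k\})$ since every transversal of $K_k$ is one of $K_k-\{h_k\}$ --- is the standard way to make that precise.
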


By using the same construction idea of Lemma~\ref{lem:gluing-gen}, we can also obtain the following corollary for linear stacked spheres.

\begin{corollary}\label{cor:gluing-path}
Let $S$ be a linear stacked $d$-sphere with $|V(S)|=n$.
Suppose $S=\partial(\alpha_1, \dots, \alpha_{n-d-1})$ for some $(d+1)$-simplices $\alpha_1,\dots,\alpha_{n-d-1}$.
If $\tau(S-\{f,g\})=m$ for some facets $f,g$ of $S$ where $f \in \alpha_1$ and $g \in \alpha_{n-d-1}$, then there is a linear stacked $d$-sphere with $nk$ vertices and the transversal number at least $mk$ for each integer $k \ge 1$.
\end{corollary}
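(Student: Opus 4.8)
The plan is to iterate the construction from the proof of Lemma~\ref{lem:gluing-gen}, chaining $k$ vertex-disjoint copies of $S$ end to end, and then to verify the one extra feature we now need: that the resulting stacked sphere is linear. Take $k$ pairwise vertex-disjoint copies $S^{(1)},\dots,S^{(k)}$ of $S$, each written $S^{(i)}=\partial(\alpha^{(i)}_1,\dots,\alpha^{(i)}_{n-d-1})$ with $f^{(i)}\in\alpha^{(i)}_1$ and $g^{(i)}\in\alpha^{(i)}_{n-d-1}$ the copies of $f,g$; here, since $S$ is linear, I take the construction sequence of $S$ to follow its dual path, so that $\alpha_1$ and $\alpha_{n-d-1}$ are the endpoints of $G(B_S)$. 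For each $i\in[k-1]$ glue $g^{(i)}$ to $f^{(i+1)}$ exactly as in Lemma~\ref{lem:gluing-gen}: writing $g^{(i)}=\{v_1,\dots,v_{d+1}\}$ and $f^{(i+1)}=\{w_1,\dots,w_{d+1}\}$, insert the bridge simplices $\gamma^{(i)}_1,\dots,\gamma^{(i)}_{d+1}$ with $\gamma^{(i)}_j$ on $\{v_j,\dots,v_{d+1}\}\cup\{w_1,\dots,w_j\}$, and put
\[
K=\partial\bigl(\alpha^{(1)}_1,\dots,\alpha^{(1)}_{n-d-1},\gamma^{(1)}_1,\dots,\gamma^{(1)}_{d+1},\alpha^{(2)}_1,\dots,\alpha^{(2)}_{n-d-1},\gamma^{(2)}_1,\dots,\alpha^{(k)}_1,\dots,\alpha^{(k)}_{n-d-1}\bigr).
\]
Since $f$ lies in the first and $g$ in the last simplex of the construction sequence of $S$, each gluing attaches a bridge between the last simplex of $S^{(i)}$ and the first simplex of $S^{(i+1)}$, which is exactly the configuration handled by Lemma~\ref{lem:gluing-gen}; iterating that lemma shows $K$ is a stacked $d$-sphere, and $|V(K)|=nk$ because the bridges introduce no new vertices.

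The transversal estimate is as in Lemma~\ref{lem:gluing-gen}. The only facets of $S^{(i)}$ destroyed by the gluings are $g^{(i)}$ (for $i<k$) and $f^{(i)}$ (for $i>1$), so every facet of $S^{(i)}-\{f^{(i)},g^{(i)}\}$ survives as a facet of $K$; as these lie on the pairwise disjoint sets $V(S^{(1)}),\dots,V(S^{(k)})$, any transversal of $K$ meets each $V(S^{(i)})$ in at least $\tau(S^{(i)}-\{f^{(i)},g^{(i)}\})=m$ vertices, whence $\tau(K)\ge mk$.

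It remains — and this is the genuinely new part — to check that $K$ is linear, i.e.\ that the dual graph $G(B_K)$ of the $(d+1)$-ball $B_K$ bounded by $K$ is a path. Since $S$ is linear, the induced subgraph of $G(B_K)$ on the simplices of $S^{(i)}$ is a path $P_i$ whose two endpoints are the simplices $\alpha^{(i)}_1\supset f^{(i)}$ and $\alpha^{(i)}_{n-d-1}\supset g^{(i)}$. Each bridge $\gamma^{(i)}_1,\dots,\gamma^{(i)}_{d+1}$ is itself a path in $G(B_K)$, attached at one end to the $\alpha^{(i)}_{n-d-1}$-endpoint of $P_i$ and at the other to the $\alpha^{(i+1)}_1$-endpoint of $P_{i+1}$. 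No further dual edges can occur: $\gamma^{(i)}_j$ has $d+2-j$ vertices in $V(S^{(i)})$ and $j$ in $V(S^{(i+1)})$, so it shares a $d$-face only with its two bridge-neighbours and, in the extreme cases $j=1$ and $j=d+1$, with the unique simplex of $B_{S^{(i)}}$ containing $g^{(i)}$, resp.\ of $B_{S^{(i+1)}}$ containing $f^{(i+1)}$; in particular no vertex of $G(B_K)$ acquires degree $3$ and no cycle is formed. Concatenating $P_1,\gamma^{(1)},P_2,\gamma^{(2)},\dots,P_k$ then exhibits $G(B_K)$ as a single path, so $K$ is linear, which finishes the proof. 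The main obstacle is precisely this last bookkeeping: the chain of gluings must be anchored at the endpoints of each copy's dual path — which is the role of the hypothesis that $f\in\alpha_1$ and $g\in\alpha_{n-d-1}$ — so that no branching or cycle appears in $G(B_K)$; the vertex count and the transversal bound carry over verbatim from Lemma~\ref{lem:gluing-gen}.
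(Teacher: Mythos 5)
Your proof is correct and follows exactly the route the paper intends: the paper states this corollary without proof, merely remarking that it follows from the same gluing construction as Lemma~\ref{lem:gluing-gen} (cf.\ Figure~\ref{fig_2}), and your chained construction, surviving-facet count, and transversal estimate are that argument spelled out. Your explicit verification that the dual graph of the glued ball remains a path --- reading the hypothesis $f\in\alpha_1$, $g\in\alpha_{n-d-1}$ as anchoring the bridges at the endpoints of each copy's dual path --- is a detail the paper omits, and is handled correctly.
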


\begin{figure}[htbp]
    \centering
    \includegraphics[scale=0.2]{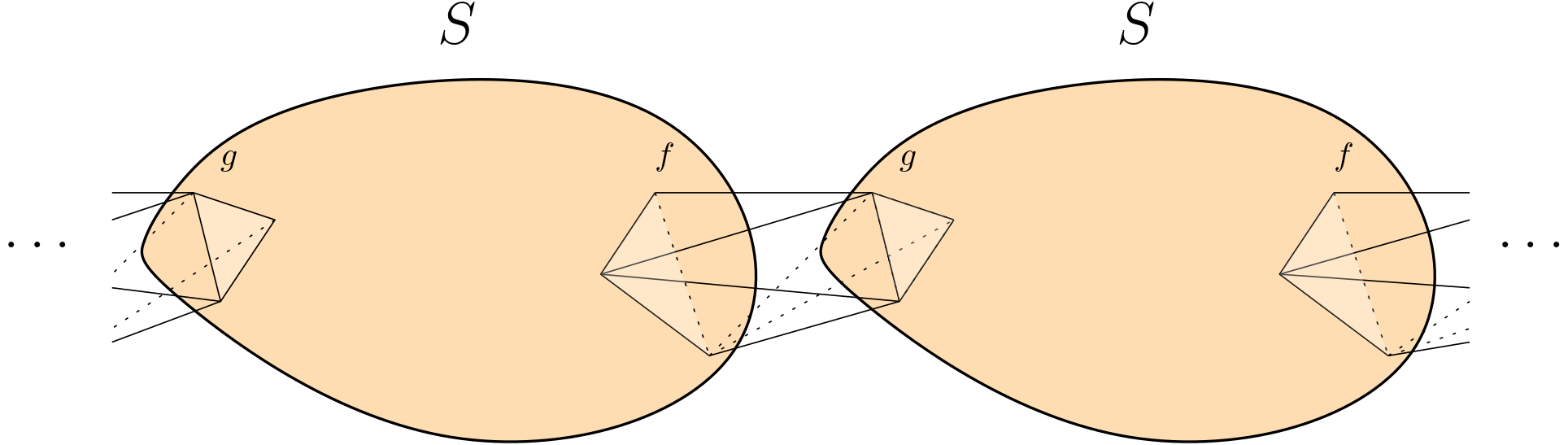}
    \caption{Gluing two copies of a linear stacked sphere $S$ in Corollary~\ref{cor:gluing-path}.}
    \label{fig_2}
\end{figure}

Now we construct an infinite family of linear stacked $d$-spheres showing Theorem~\ref{thm:lowerpath}.

\begin{proof}[Proof of Theorem~\ref{thm:lowerpath}]
For each $i \in [2d+7]$, let $\alpha_i=\{i,i+1,\dots,d+i+1\}$ and $\overline{\alpha}_i$ be the $(d+1)$-simplex on $\alpha_i$.
Let $S=\partial(\overline{\alpha}_1,\dots,\overline{\alpha}_{2d+7})$.
Then $S$ is a linear stacked $d$-sphere on $[3d+8]$.
Let $f=\alpha_1 \setminus \{d+1\}$ and $g=\alpha_{2d+7}\setminus\{2d+8\}$.
We claim that $\tau(S-\{f,g\}) \ge 6$.
Then by Corollary~\ref{cor:gluing-path}, there is a linear stacked $d$-sphere with $(3d+8)k$ vertices and the transversal number at least $6k$ for each integer $k \ge 1$, which completes the proof.

To show $\tau(S-\{f,g\}) \ge 6$ by contradiction, assume $T \subset [3d+8]$ is a transversal of $S-\{f,g\}$ with size $5$.
First, we will show $|T \cap [d+3]| \ge 2$.
Let $\gamma_1=\alpha_2\setminus\{3\}=\{2\} \cup \{4,5,\dots,d+3\}$, $\gamma_2=\gamma_{d+3}=\alpha_1\setminus\{2\}=\{1\}\cup\{3,4,\dots,d+2\}$, and $\gamma_i=\alpha_2\setminus\{i\}$ for $3 \le i \le d+2$.
Then $\gamma_i \subset [d+3]\setminus\{i\}$ is a $d$-face of $S-\{f,g\}$ for each $i \in [d+3]$.
Since $T$ is a transversal of $S-\{f,g\}$, to cover all $\gamma_i$'s, we can conclude that $|T \cap [d+3]| \ge 2$.
Similarly, we can also obtain that $|T \cap \{2d+6,\dots,3d+8\}| \ge 2$.

Then we have $|T \cap \{d+4,\dots,2d+5\}| \le 1$.
Since $\alpha_{d+4}\setminus\{i\}=\{d+4,\dots,2d+5\}\setminus\{i\}$ is a $d$-face of $S-\{f,g\}$ for each $d+5 \le i \le 2d+4$, it must be $|T \cap \{d+4,\dots,2d+5\}|=1$ and $T \cap \{d+4,\dots,2d+5\}$ is either $\{d+4\}$ or $\{2d+5\}$.
Without loss of generality, we may assume $T \cap \{d+4,\dots,2d+5\}=\{2d+5\}$.

Now we have $|T \cap [d+3]|=|T \cap \{2d+6,\dots,3d+8\}|=2$.
Since $\alpha_{d+3}\setminus\{d+4\}=\{d+3\}\cup\{d+5,d+6,\dots,2d+4\}$ and $\alpha_{d+2}\setminus\{d+3\}=\{d+2\}\cup\{d+4,d+5,\dots,2d+3\}$ are $d$-faces of $S-\{f,g\}$ and $T \cap \{d+4,\cdots,2d+4\}=\emptyset$, we obtain that $d+2,d+3 \in T$.
Thus $T\cap [d+3]=\{d+2,d+3\}$ and hence $T \cap [d+1]=\emptyset$.
However, $[d+1]=\alpha_1\setminus\{d+2\}$ is a $d$-face of $S-\{f,g\}$, which contradicts the assumption.
\end{proof}

Next, we give a construction of infinite family of non-linear stacked $d$-spheres proving Theorem~\ref{thm:lowergeneral}.

\begin{proof}[Proof of Theorem~\ref{thm:lowergeneral}]
First, we construct stacked $d$-spheres for each $d \ge 2$ showing (2), and next we give a construction of (1), which can be obtained from the previous construction of (2) with a slight modification.

\textbf{Proof of (2):}
Fix an integer $d \geq 2$. 
By Corollary~\ref{cor:gluing-gen}, to prove (2), it is enough to find a stacked $d$-sphere $S_d$ on $(d+2)^2$ vertices such that $\tau(S_d -\{f\})\ge 2d+3$ for some facet $f$ of $S_d$.
Let $V$ be a set defined as
$$
V := [d+2] \cupdot \{a^{(i)}_j : i \in [d+1] \textrm{ and } j \in [d+2]\}
$$
where $a^{(i)}_j$'s are all distinct elements. 
Note that $|V|=(d+2)^2$.
We construct a stacked $d$-sphere $S_d$ on vertex set $V$. 
For each $i \in [d+1]$, let $\varphi_i \in \pi_{d+2}$ be a permutation on $[d+2]$ that will be chosen later. For each $j \in [d+1]$, define $\tau^{(i)}_j$ as
\[
\tau^{(i)}_j := \{\varphi_i(j+1), \varphi_i(j+2), \ldots, \varphi_i(d+2), a^{(i)}_1, a^{(i)}_2, \ldots, a^{(i)}_j\},
\]
and $\tau^{(i)}_{d+2} := \{a^{(i)}_1, a^{(i)}_2, \ldots, a^{(i)}_{d+2}\}$. 
Let $B$ be the stacked $(d+1)$-ball consists of $(d+1)$-simplices $\overline{[d+2]}$ and all $\overline{\tau}^{(i)}_j$ for $i \in [d+1]$, $j \in [d+2]$. Finally, let $S_d = \partial B$ be the stacked $d$-sphere of $B$.

We claim that there exists an appropriate choice of $\varphi_1, \varphi_2, \ldots, \varphi_{d+1} \in \pi_{d+2}$ and facet $f \in S_d$ such that $\tilde{S}_d := S_d - \{f\}$ has the transversal number at least $2d+3$.

Let $T$ be a transversal of $S_d$. The following observation also applies to transversals of $\tilde{S}_d$ in a similar way.

\begin{obs}\label{o:TransversalatLeaf}
$\size{T \cap \tau^{(i)}_{d+2}} \geq 1$ for each $i \in [d+1]$ and if $\size{T \cap \tau^{(i)}_{d+2}} = 1$ for some $i \in [d+1]$, then we have $\varphi_i(d+1), \varphi_i(d+2) \in T$.
\end{obs}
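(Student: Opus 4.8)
The statement to prove is Observation~\ref{o:TransversalatLeaf}, concerning transversals of the stacked sphere $S_d$ built from the permutations $\varphi_i$. Let me sketch a proof.

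=== PROOF PROPOSAL ===

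\textbf{Setup and strategy.} The plan is to exploit the structure of the simplices $\tau^{(i)}_j$ as a ``flip sequence'' that walks from the face $\overline{[d+2]}$ down to the simplex $\overline{\tau}^{(i)}_{d+2}=\overline{\{a^{(i)}_1,\dots,a^{(i)}_{d+2}\}}$, one vertex at a time. For a fixed $i$, the $(d+1)$-simplices $\overline{\tau}^{(i)}_1,\dots,\overline{\tau}^{(i)}_{d+2}$ are attached successively (starting by stacking $\overline{\tau}^{(i)}_1$ onto a facet of $\overline{[d+2]}$), so the relevant facets of the boundary sphere $S_d$ that sit ``near the leaf'' $\overline{\tau}^{(i)}_{d+2}$ are precisely the $d$-faces of $\overline{\tau}^{(i)}_{d+2}$ obtained by deleting one vertex — except the one facet through which $\overline{\tau}^{(i)}_{d+2}$ was glued to $\overline{\tau}^{(i)}_{d+1}$. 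First I would identify exactly which facets of $S_d$ are contained in the vertex set $\tau^{(i)}_{d+2}=\{a^{(i)}_1,\dots,a^{(i)}_{d+2}\}$, using the stacking description.

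\textbf{Key steps.} Concretely: (1) From $\tau^{(i)}_{d+1}=\{\varphi_i(d+2),a^{(i)}_1,\dots,a^{(i)}_{d+1}\}$ and $\tau^{(i)}_{d+2}=\{a^{(i)}_1,\dots,a^{(i)}_{d+2}\}$, observe $\tau^{(i)}_{d+1}\cap\tau^{(i)}_{d+2}=\{a^{(i)}_1,\dots,a^{(i)}_{d+1}\}$, so $\overline{\tau}^{(i)}_{d+2}$ is stacked onto the free facet $\eta=\{a^{(i)}_1,\dots,a^{(i)}_{d+1}\}$ of the complex built so far. Hence in $S_d=\partial B$, the facets lying inside the vertex set $\tau^{(i)}_{d+2}$ are exactly the $d+1$ faces $\tau^{(i)}_{d+2}\setminus\{a^{(i)}_j\}$ for $j\in[d+2]$ \emph{except} $\tau^{(i)}_{d+2}\setminus\{a^{(i)}_{d+2}\}=\eta$; that is, the $d+1$ facets $F_j:=\tau^{(i)}_{d+2}\setminus\{a^{(i)}_j\}$ for $j\in[d+1]$. (One should double-check that no other facet of $B$ is attached inside these faces — this follows because at the moment $\overline{\tau}^{(i)}_{d+2}$ is added, all of $F_1,\dots,F_{d+1}$ are free facets, and $i$ being the index of interest, the later-added simplices $\tau^{(i')}_\bullet$ or $\tau^{(i)}_\bullet$ do not touch the vertices $a^{(i)}_j$, since the $a$-vertices are private to each block.) (2) A transversal $T$ must hit every $F_j$, $j\in[d+1]$. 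If $|T\cap\tau^{(i)}_{d+2}|\geq 2$ we are done with the first assertion. (3) If $|T\cap\tau^{(i)}_{d+2}|=1$, say $T\cap\tau^{(i)}_{d+2}=\{a^{(i)}_t\}$: since $a^{(i)}_t$ must lie in every $F_j=\tau^{(i)}_{d+2}\setminus\{a^{(i)}_j\}$ for $j\in[d+1]$, and $a^{(i)}_t\notin F_t$ when $t\leq d+1$, we must have $t=d+2$. But then $a^{(i)}_{d+2}\in T$, and this single vertex cannot cover $F_j$ for $j\leq d+1$ — contradiction. Wait: this shows $|T\cap\tau^{(i)}_{d+2}|\geq 2$ always, which is stronger than what Observation~\ref{o:TransversalatLeaf} claims, so I must re-read the faces involved.

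\textbf{Reconciliation and the real argument.} The point is that $T$ need not be contained in $\tau^{(i)}_{d+2}$: the facets adjacent to the leaf also include faces of $\overline{\tau}^{(i)}_{d+1}$, namely the facets $\tau^{(i)}_{d+1}\setminus\{x\}$ for $x$ other than the gluing vertices. In particular $\tau^{(i)}_{d+1}=\{\varphi_i(d+2),a^{(i)}_1,\dots,a^{(i)}_{d+1}\}$, and the facet $\tau^{(i)}_{d+1}\setminus\{a^{(i)}_{d+1}\}=\{\varphi_i(d+2),a^{(i)}_1,\dots,a^{(i)}_d\}$ lies in $S_d$ (it is the unique face shared by $\overline{\tau}^{(i)}_{d+1}$ and $\overline{\tau}^{(i)}_d$ — actually one must check it is free, which it is). So the facets of $S_d$ that force vertices near block $i$ are the $d+1$ facets $F_j=\tau^{(i)}_{d+2}\setminus\{a^{(i)}_j\}$, $j\in[d+1]$, which between them cover all of $\tau^{(i)}_{d+2}$. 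If $|T\cap\tau^{(i)}_{d+2}|=1$ with the unique vertex being some $a^{(i)}_t$, then for $F_t$ (when $t\leq d+1$) to be covered we would need another vertex of $F_t$ in $T$, but $F_t\subset\tau^{(i)}_{d+2}$, contradiction — UNLESS $t$ can only be such that all $F_j$ contain $a^{(i)}_t$, forcing $t=d+2$. With $a^{(i)}_{d+2}\in T$, the facet $F_{d+2}$ is — hmm, $F_{d+2}$ is not among our list. Let me restate cleanly: the correct list of leaf-facets must be re-derived so that \emph{two} of them are not fully inside $\tau^{(i)}_{d+2}$; then the conclusion ``$\varphi_i(d+1),\varphi_i(d+2)\in T$'' emerges because those two escaping facets are $\tau^{(i)}_{d+2}\setminus\{a^{(i)}_{d+1}\}\cup\{\varphi_i(d+1)\}$-type faces. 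The main obstacle — and the step I would spend the most care on — is precisely pinning down which $d$-faces around the leaf block $i$ survive as facets of $S_d$, i.e. tracking the gluings of $\overline{\tau}^{(i)}_{d}$, $\overline{\tau}^{(i)}_{d+1}$, $\overline{\tau}^{(i)}_{d+2}$ and verifying freeness; once that combinatorial bookkeeping is done, the pigeonhole argument for ``if only one vertex in the leaf simplex then $\varphi_i(d+1),\varphi_i(d+2)$ are forced'' is immediate. I would organize it as: (a) prove the explicit list of facets of $S_d$ contained in $\tau^{(i)}_{d+1}\cup\tau^{(i)}_{d+2}$; (b) note these facets collectively cover $\tau^{(i)}_{d+2}$, giving $|T\cap\tau^{(i)}_{d+2}|\geq 1$; (c) in the equality case, use that the single vertex fails to hit the two facets containing $\varphi_i(d+1)$ or $\varphi_i(d+2)$ respectively, forcing both into $T$.
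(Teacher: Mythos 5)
Your overall strategy --- list the free $d$-faces of $B$ near the leaf $\overline{\tau}^{(i)}_{d+2}$ of block $i$ and pigeonhole the transversal against them --- is the same as the paper's, and your step (1) correctly identifies the facets $F_j=\tau^{(i)}_{d+2}\setminus\{a^{(i)}_j\}$ for $j\in[d+1]$, from which both $\size{T\cap\tau^{(i)}_{d+2}}\ge 1$ and, in the equality case, $T\cap\tau^{(i)}_{d+2}=\{a^{(i)}_{d+2}\}$ follow. But your step (3) then asserts a spurious contradiction: $a^{(i)}_{d+2}$ lies in \emph{every} $F_j$ with $j\le d+1$, so it does cover all of them. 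You notice this yourself, but the ``reconciliation'' leaves the argument in a confused state --- the claim that ``the correct list of leaf-facets must be re-derived so that two of them are not fully inside $\tau^{(i)}_{d+2}$'' is not what happens; the $F_j$ are all inside $\tau^{(i)}_{d+2}$ and that is fine. The missing facets live elsewhere.

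The genuine gap is that the second assertion, $\varphi_i(d+1),\varphi_i(d+2)\in T$, is never actually proved: you defer the ``combinatorial bookkeeping'' and never exhibit the facets that force these two vertices. Moreover, your plan (a)--(c) restricts attention to facets of $S_d$ contained in $\tau^{(i)}_{d+1}\cup\tau^{(i)}_{d+2}$, and that is provably insufficient: every such facet is a subset of $\{\varphi_i(d+2),a^{(i)}_1,\dots,a^{(i)}_{d+2}\}$, so from them you can at best force $\varphi_i(d+2)\in T$ (via the free faces $\tau^{(i)}_{d+1}\setminus\{a^{(i)}_k\}$, $k\in[d]$, each of which, once $T\cap\tau^{(i)}_{d+2}=\{a^{(i)}_{d+2}\}$, can only be met by $T$ at $\varphi_i(d+2)$). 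To force $\varphi_i(d+1)\in T$ you must go one simplex further down the path, as the paper does: the face $\tau^{(i)}_d\setminus\{\varphi_i(d+2)\}=\{\varphi_i(d+1),a^{(i)}_1,\dots,a^{(i)}_d\}$ is free in $B$ (since $\overline{\tau}^{(i)}_d$ is glued to its neighbours along $\tau^{(i)}_d\setminus\{a^{(i)}_d\}$ and $\tau^{(i)}_d\setminus\{\varphi_i(d+1)\}$), hence a facet of $S_d$, and its only vertex that $T$ can contain is $\varphi_i(d+1)$. Without bringing $\tau^{(i)}_d$ into the picture the conclusion does not follow.
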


\begin{proof}[Proof of Observation~\ref{o:TransversalatLeaf}]
Since $\{a^{(i)}_2, a^{(i)}_3, \ldots, a^{(i)}_{d+2}\} \subseteq \tau^{(i)}_{d+2}$ is a facet of $S_d$, we have $\size{T \cap \tau^{(i)}_{d+2}} \geq 1$ for each $i \in [d+1]$.

Suppose $\size{T \cap \tau^{(i)}_{d+2}} = 1$ for some $i \in [d+1]$.
Let $a^{(i)}_j$ be the common element of $T$ and $\tau^{(i)}_{d+2}$. If $j < d+2$, then $a^{(i)}_j$ avoids the facet $\tau^{(i)}_{d+2} - \{a^{(i)}_j\}$ of $S_d$. Hence $j = d+2$. Now consider $(d+1)$-subsets of $\tau^{(i)}_{d+1} = \{\varphi_i(d+2), a^{(i)}_1, a^{(i)}_2, \ldots, a^{(i)}_{d+1}\}$. If $\varphi_i(d+2) \notin T$, then $T$ does not intersect with any $(d+1)$-subset of $\tau^{(i)}_{d+1}$ in $S_d$. Thus, we obtain $\varphi_i(d+2) \in T$.
Finally, $T$ intersects with $\{\varphi_i(d+1), a^{(i)}_1, a^{(i)}_2, \ldots, a^{(i)}_{d}\}$ which is a $(d+1)$-subset of $\tau^{(i)}_d$, hence $\varphi_i(d+1) \in T$.
\renewcommand{\qedsymbol}{$\blacksquare$}
\end{proof}

Now we choose permutations $\varphi_1, \varphi_2, \ldots, \varphi_{d+1} \in \pi_{d+2}$ and a facet $f \in S_d$ that will be removed. Let $\varphi_i(j) = i + j$ mod $d+2$. Two $(d+1)$-simplices $\overline{[d+2]}$ and $\overline{\tau}^{(i)}_1$ are attached along the $d$-face $\{\varphi_i(2), \varphi_i(3)$, $\ldots$, $\varphi_i(d+2)\}$, and this $d$-face is distinct for every $i \in [d+1]$ by our choice of $\varphi_i$'s.
Thus $S_d$ is well-defined.
Let $f$ be any facet of $S_d$ that does not appear in the proof of Observation~\ref{o:TransversalatLeaf}, for instance $f = \{\varphi_1(d+1), \varphi_1(d+2), a^{(1)}_2, \ldots, a^{(1)}_{d}\}$ which is a $(d+1)$-subset of $\tau^{(1)}_d$.

Let $\tilde{S}_{d} := S_d - \{f\}$. Our claim is that the transversal number of $\tilde{S}_d$ is at least $2d+3$. Let $\tilde{T}$ be a transversal of $\tilde{S}_d$. Then the proof of Observation~\ref{o:TransversalatLeaf} also works for $\tilde{S}_d$ and $\tilde{T}$.

We estimate the size of $\tilde{T}$. Let $t=|\tilde{T} \cap [d+2]|$.
Since we attached $(d+1)$ many simplices $\overline{\tau}^{(1)}_1,\dots,\overline{\tau}^{(d+1)}_1$ to $\overline{[d+2]}$, there is a $(d+1)$-subset of $[d+2]$ that is a facet of $S_d$.
Thus we obtain that $t \ge 1$.

Now, there are at most $t-1$ indices $i \in [d+1]$ such that both $\varphi_i(d+1), \varphi_i(d+2) \in \tilde{T}$ by our choice of $\varphi_i$'s. 
Then for those at most $t-1$ indices $i$, we have $\size{\tilde{T} \cap \tau^{(i)}_{d+2}} \ge 1$, and for the other at least $d-t+2$ indices $i$, we have $\size{\tilde{T} \cap \tau^{(i)}_{d+2}} \ge 2$ by Observation~\ref{o:TransversalatLeaf}. 
Since $[d+2] \cupdot \tau^{(1)}_{d+2} \cupdot \cdots \cupdot \tau^{(d+1)}_{d+2}$ forms a partition of $V$, we conclude that 
\begin{align*}
    \size{\tilde{T}} &= \size{\tilde{T}\cap[d+2]}+\sum_{i \in [d+1]}\size{\tilde{T}\cap\tau^{(i)}_{d+2}}\\
    &\geq t+(t-1) \cdot 1 + (d-t+2) \cdot 2 =2d+3.
\end{align*}

\textbf{Proof of (1):}
By Corollary~\ref{cor:gluing-gen}, it suffices to construct a stacked $2$-sphere $S_2$ on $13$ vertices such that $\tau(S_2 -\{f\})\ge 6$ for some facet $f$ of $S_2$.

We construct a stacked $2$-sphere $S_2$ on the following vertex set $V$.
$$
V := \{1,2,3,4\} \cupdot \{a^{(i)}_j : i \in [3] \textrm{ and } j \in [3]\}
$$
where $a^{(i)}_j$'s are all distinct elements. 
Note that $|V|=13$.

For each $i \in [3]$, let $\varphi_i \in \pi_{4}$ be a permutation on $[4]$ that will be chosen later. For each $j \in [3]$, define $\tau^{(i)}_j$ as
\[
\tau^{(i)}_j = \{\varphi_i(j+1), \ldots, \varphi_i(4), a^{(i)}_1, \ldots, a^{(i)}_j\}.
\]
Let $B$ be the stacked 3-ball consists of 3-simplices $\overline{[4]}$ and all $\overline{\tau}^{(i)}_j$ for $i \in [3]$, $j \in [3]$. Finally, let $S_2 = \partial B$ be the stacked $2$-sphere of $B$.

Let $T$ be a transversal of $S_2$. We need the following analogue of Observation~\ref{o:TransversalatLeaf}.

\begin{obs}\label{o:TransversalatLeaf2}
For every $i \in [3]$, $\size{T \cap \{a^{(i)}_1, a^{(i)}_2, a^{(i)}_3\}} \geq 1$. If $\size{T \cap \{a^{(i)}_1, a^{(i)}_2, a^{(i)}_3\}} = 1$ for some $i \in [3]$, then we have either $\varphi_i(4) \in T$ or $\{\varphi_i(2), \varphi_i(3)\}\subseteq T$.
\end{obs}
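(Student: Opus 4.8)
The plan is to mimic the proof of Observation~\ref{o:TransversalatLeaf}, now working inside the three ``legs'' of the dual tree of $B$. First I would record the structure: for each fixed $i\in[3]$, the $3$-simplices $\overline{[4]},\overline{\tau}^{(i)}_1,\overline{\tau}^{(i)}_2,\overline{\tau}^{(i)}_3$ form a path in the dual tree, with $\overline{\tau}^{(i)}_1$ glued to $\overline{[4]}$ along the $2$-face $\{\varphi_i(2),\varphi_i(3),\varphi_i(4)\}$, then $\overline{\tau}^{(i)}_2$ glued to $\overline{\tau}^{(i)}_1$ along $\{\varphi_i(3),\varphi_i(4),a^{(i)}_1\}$, and $\overline{\tau}^{(i)}_3$ glued to $\overline{\tau}^{(i)}_2$ along $\{\varphi_i(4),a^{(i)}_1,a^{(i)}_2\}$. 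Since each $\overline{\tau}^{(i)}_j$ is a tetrahedron, every triangular face of it that is not a gluing face is a free $2$-face of $B$, hence a facet of $S_2$. In particular $\overline{\tau}^{(i)}_3$, being a leaf, contributes the facets $\{a^{(i)}_1,a^{(i)}_2,a^{(i)}_3\}$, $\{\varphi_i(4),a^{(i)}_1,a^{(i)}_3\}$, $\{\varphi_i(4),a^{(i)}_2,a^{(i)}_3\}$; $\overline{\tau}^{(i)}_2$ contributes $\{\varphi_i(3),a^{(i)}_1,a^{(i)}_2\}$; and $\overline{\tau}^{(i)}_1$ contributes $\{\varphi_i(2),\varphi_i(4),a^{(i)}_1\}$. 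These are the facets I will use.

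The first assertion is then immediate: $\{a^{(i)}_1,a^{(i)}_2,a^{(i)}_3\}$ is a facet of $S_2$, so the transversal $T$ meets it.

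For the second assertion, suppose $T\cap\{a^{(i)}_1,a^{(i)}_2,a^{(i)}_3\}=\{a^{(i)}_k\}$ and split on $k$. If $k\in\{1,2\}$, take $k'\in\{1,2\}\setminus\{k\}$: the facet $\{\varphi_i(4),a^{(i)}_{k'},a^{(i)}_3\}$ contains no $a$-vertex of $T$, so $\varphi_i(4)\in T$ and we are done. If $k=3$, then $a^{(i)}_1,a^{(i)}_2\notin T$, so that $T$ hits the facet $\{\varphi_i(3),a^{(i)}_1,a^{(i)}_2\}$ forces $\varphi_i(3)\in T$, and that $T$ hits the facet $\{\varphi_i(2),\varphi_i(4),a^{(i)}_1\}$ (with $a^{(i)}_1\notin T$) forces $\varphi_i(2)\in T$ or $\varphi_i(4)\in T$. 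Hence either $\varphi_i(4)\in T$, or $\{\varphi_i(2),\varphi_i(3)\}\subseteq T$, as claimed.

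The only real work is the bookkeeping in the first step — correctly listing which $2$-faces of the $\overline{\tau}^{(i)}_j$'s survive as facets of $S_2$ — together with the observation that, unlike the case $k\in\{1,2\}$, the case $k=3$ does not force $\varphi_i(4)\in T$ outright: one has to propagate the constraint two steps up the leg, which is exactly why the conclusion is a disjunction. I do not expect any genuine obstacle beyond this; it is a finite case check entirely parallel to Observation~\ref{o:TransversalatLeaf}.
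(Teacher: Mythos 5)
Your proof is correct and follows essentially the same route as the paper: it uses exactly the facets $\{a^{(i)}_1,a^{(i)}_2,a^{(i)}_3\}$, $\{\varphi_i(4),a^{(i)}_{k'},a^{(i)}_3\}$, $\{\varphi_i(3),a^{(i)}_1,a^{(i)}_2\}$ and $\{\varphi_i(2),\varphi_i(4),a^{(i)}_1\}$, the only difference being that you case on which $a^{(i)}_k$ lies in $T$ while the paper assumes $\varphi_i(4)\notin T$ and first deduces that the singleton must be $a^{(i)}_3$. The bookkeeping of which $2$-faces survive as facets is accurate, so no changes are needed.
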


\begin{proof}[Proof of Observation~\ref{o:TransversalatLeaf2}]
Since $\{a^{(i)}_1, a^{(i)}_2, a^{(i)}_3\} \subseteq \tau^{(i)}_3$ is a facet of $S_2$, it is obvious that $\size{T \cap \{a^{(i)}_1, a^{(i)}_2, a^{(i)}_3\}} \geq 1$ for each $i \in [3]$.

Suppose $\size{T \cap \{a^{(i)}_1, a^{(i)}_2, a^{(i)}_3\}} = 1$ for some $i \in [3]$ and $\varphi_i(4) \notin T$. 
Since $\tau^{(i)}_3\setminus\{a^{(i)}_1\}=\{\varphi_i(4), a^{(i)}_2, a^{(i)}_3\}$ and $\tau^{(i)}_3\setminus\{a^{(i)}_2\}=\{\varphi_i(4), a^{(i)}_1, a^{(i)}_3\}$ are facets of $S_2$, we can conclude that $T \cap \{a^{(i)}_1, a^{(i)}_2, a^{(i)}_3\} = \{a^{(i)}_3\}$. 
Now consider facets $\{\varphi_i(3), a^{(i)}_1, a^{(i)}_2\} \subseteq \tau^{(i)}_2$ and $\{\varphi_i(2), \varphi_i(4), a^{(i)}_1\} \subseteq \tau^{(i)}_1$ of $S_2$. Since $T$ intersects both of them but $T \cap \{\varphi_i(4), a^{(i)}_1, a^{(i)}_2\} = \emptyset$, we conclude $\varphi_i(2) \in T$ and $\varphi_i(3) \in T$.
\renewcommand{\qedsymbol}{$\blacksquare$}
\end{proof}

Now we choose permutations as $\varphi_1 = 1234$, $\varphi_2 = 4123$ and $\varphi_3 = 3421$.
Let $f$ be any facet of $S_d$ that does not appear in the proof of Observation~\ref{o:TransversalatLeaf2}, for instance $f = \{1, 3, 4\}$.

Let $\tilde{S}_{2} := S_2 - \{f\}$. Our claim is that the transversal number of $\tilde{S}_2$ is at least $6$.
Let $\tilde{T}$ be a transversal of $\tilde{S}_2$. Then the proof of Observation~\ref{o:TransversalatLeaf2} also works for $\tilde{S}_2$ and $\tilde{T}$.

We estimate the size of $\tilde{T}$. Let $t=|\tilde{T} \cap [4]|$. From a simple case analysis and by our choice of $\varphi_i$'s, one can show that there are at most $t$ indices $i \in [3]$ satisfying either $\varphi_i(4) \in T$ or $\{\varphi_i(2), \varphi_i(3)\} \subseteq T$.
Then for those indices $i$, we have $\size{\tilde{T} \cap \{a^{(i)}_1, a^{(i)}_2, a^{(i)}_3\}} \ge 1$, and for the other at least $3 - t$ indices $i$, we have $\size{\tilde{T} \cap \{a^{(i)}_1, a^{(i)}_2, a^{(i)}_3\}} \ge 2$ by Observation~\ref{o:TransversalatLeaf2}.

Since $[4] \cupdot \{a^{(1)}_1, a^{(1)}_2, a^{(1)}_3\} \cupdot \{a^{(2)}_1, a^{(2)}_2, a^{(2)}_3\} \cupdot \{a^{(3)}_1, a^{(3)}_2, a^{(3)}_3\}$ forms a partition of $V$, we conclude that
\begin{align*}
    \size{\tilde{T}} &= \size{\tilde{T}\cap[4]}+\sum_{i \in [3]}\size{\tilde{T}\cap\{a^{(i)}_1, a^{(i)}_2, a^{(i)}_3\}}\\
    &\geq t+ t \cdot 1 + (3-t) \cdot 2 =6.
    \qedhere
\end{align*}
\end{proof}

\section{Tight upper bound for stacked $2$-spheres}\label{sec:upper}
In this section, we present the proof of Theorem~\ref{thm:upper} which gives an optimal upper bound for stacked $2$-spheres.

\begingroup
\def\thetheorem{\ref{thm:upper}}
\begin{theorem}
Let $n \ge 4$ be an integer and let $S$ be a linear stacked $2$-sphere on $n$ vertices. Then there exists a transversal $T \subseteq V(S)$ of $S$ using at most $\left\lceil\frac{3}{7}n\right\rceil$ vertices.
\end{theorem}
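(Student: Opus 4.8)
The plan is to use the path structure of the dual graph and run a left‑to‑right dynamic program over the tetrahedra of $S$ (equivalently, induct on the number of tetrahedra).

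\emph{Structural reduction.} Write $S=\partial(\sigma_1,\dots,\sigma_m)$, $m=n-3$, with dual path $\sigma_1-\sigma_2-\cdots-\sigma_m$; let $\eta_i=\sigma_{i-1}\cap\sigma_i$ and let $w_i$ be the apex of $\sigma_i$, so $\sigma_i=\eta_i\cup\{w_i\}$. Since the dual graph is a path, $\eta_{i+1}$ is one of the three triangles of $\sigma_i$ through $w_i$; thus $\eta_{i+1}=\{w_i\}\cup(\eta_i\setminus\{u_i\})$ for a unique $u_i\in\eta_i$, and the triples $A_i:=\eta_i$ form a sliding window of size three in which step $i$ deletes $u_i$ and inserts $w_i$. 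Reading off the free faces makes the facet hypergraph completely explicit: $\sigma_1$ contributes the three facets $\{x\}\cup(A_2\setminus\{v\})$ ($v\in A_2$), where $x$ is the vertex of $\sigma_1$ outside $\eta_2$; each $\sigma_i$ with $2\le i\le m-1$ contributes exactly the two facets $\{w_i,u_i,v\}$ ($v\in A_i\setminus\{u_i\}$); and $\sigma_m$ contributes $\{w_m\}\cup(A_m\setminus\{v\})$ ($v\in A_m$). Hence $T\subseteq V(S)$ is a transversal of $S$ if and only if (i) $x\in T$ or $|T\cap A_2|\ge 2$; (ii) for every $2\le i\le m-1$, $w_i\in T$, or $u_i\in T$, or $A_i\setminus\{u_i\}\subseteq T$; and (iii) $w_m\in T$ or $|T\cap A_m|\ge 2$.

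\emph{The dynamic program.} The key point, visible from (ii)--(iii), is that the effect of adjoining $\sigma_{i+1}$ on top of $\sigma_1\cup\dots\cup\sigma_i$ --- whether it forces a fresh transversal vertex, and what it buys for later steps --- depends only on $T\cap A_{i+1}$; indeed any vertex of $\sigma_1\cup\dots\cup\sigma_i$ outside $A_{i+1}$ never again lies in a window, hence never again in a facet. So I would carry, for each of the eight subsets $X\subseteq A_{i+1}$, the quantity $g_i(X)$: the least size of a $T\subseteq V(\sigma_1\cup\dots\cup\sigma_i)$ that meets every facet of $S$ lying in $\sigma_1\cup\dots\cup\sigma_i$ and has $T\cap A_{i+1}=X$. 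The base case $\sigma_1$ is immediate from (i): $g_1(\emptyset)=1$, $g_1=2$ on singletons and on pairs, $g_1(A_2)=3$. Each update $g_i\mapsto g_{i+1}$ --- determined by which vertex of $A_{i+1}$ is $u_{i+1}$ --- is a fixed finite rule: one pays nothing if $X$ already contains $u_{i+1}$ or both survivors (and may optionally add $w_{i+1}$ at cost $1$), and otherwise must add $w_{i+1}$; the successor state is $X\setminus\{u_{i+1}\}$, together with $w_{i+1}$ if it was added. Finally $\tau(S)=\min\bigl(\min_{|X|\ge 2}g_{m-1}(X),\ 1+\min_{|X|\le 1}g_{m-1}(X)\bigr)$ by (iii). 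I would then prove by induction on $i$ a bound of the shape $g_i(X)\le\bigl\lceil\frac{3(i+3)-r(X)}{7}\bigr\rceil$ for a suitable fixed nonnegative integer ``rebate'' $r(X)$, larger on the states that already cover more of the window, which feeds into the final formula to yield exactly $\tau(S)\le\lceil 3n/7\rceil$.

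\emph{Main obstacle.} The heart of the argument is choosing $r(\cdot)$ on the eight window‑states so that it is at once invariant under every update rule (over all three positions of $u_{i+1}$), compatible with the two endpoint conditions (i) and (iii), and tight enough that the ceiling really comes out as $\lceil 3n/7\rceil$ and not $\frac37 n+O(1)$. Equivalently, one must show that an explicit min‑plus transfer operator on a constant‑size state space has growth rate exactly $\frac37$ per tetrahedron; since the construction behind Theorem~\ref{thm:lowerpath} (and its glued descendants) realizes this rate, the analysis has no slack and must be sharp precisely on that configuration. Verifying the invariance is a finite but somewhat tedious case analysis over the eight states and the three gluing choices, and converting the resulting estimate into the exact ceiling requires carrying the rebates and ceilings carefully, together with a direct check of the small cases $4\le n\le 10$.
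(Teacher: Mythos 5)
Your structural reduction is correct, and the dynamic-programming formulation is a legitimate and genuinely different way to organize the induction: the explicit facet list, the sufficiency of the window state $T\cap A_{i+1}$, the base values of $g_1$, and the transition rule are all right. (For contrast, the paper inducts in blocks of seven vertices, carrying \emph{two} transversals whose union meets three vertices of the last tetrahedron, and concentrates the quantitative work in a lemma about $10$-vertex linear stacked $2$-spheres.) The difficulty is that everything you have actually written down is the routine part; the entire content of the theorem sits in the step you defer, and in the form you propose that step provably cannot be carried out.

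Concretely, no rebate $r(\cdot)$ defined on the eight subsets of the sliding window (equivalently, invariant under relabelling the three window vertices) can work. The inductive step into a state $Y\subseteq A_{i+2}$ requires some valid predecessor $X$ and decision $d\in\{0,1\}$ with $r(Y)\le r(X)+3-7d$, and since $3(i+3)$ runs over all residues mod $7$, the ceilings do not relax this. Now take $Y=\emptyset$: because $A_{i+1}\setminus\{u_{i+1}\}\subseteq A_{i+2}$, the only predecessor compatible with your condition (ii) at cost $0$ is $X=\{u_{i+1}\}$, forcing $r(\emptyset)\le r(\text{singleton})+3$. Take $Y=\{w_{i+1}\}$: every route into it costs $1$ and comes from $X=\emptyset$ or $X=\{u_{i+1}\}$, forcing $r(\text{singleton})\le\max\bigl(r(\emptyset),r(\text{singleton})\bigr)-4$. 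Combining the two gives $r(\text{singleton})\le r(\text{singleton})-1$, a contradiction. (Separately, the base case forces $r(A_2)\le -3$, against your nonnegativity requirement, though that state is dominated and could be dropped.) The underlying reason is that the rate $3/7$ is achieved only on average over several consecutive tetrahedra, not per step; any valid potential must therefore distinguish window vertices by how recently they entered the window (enlarging the state space beyond the eight subsets), or one must amortize over a block of steps---which is essentially what the paper's block-of-seven lemma does. Until you exhibit a working potential on such an enlarged state space and verify all transitions together with both endpoint conditions, the argument is not a proof.
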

\addtocounter{theorem}{-1}
\endgroup

We prove Theorem~\ref{thm:upper} by induction on the number of vertices.
To do that, we use the following lemma, which says whenever we add 7 more vertices we can take 3 vertices so that together with them we obtain a transversal.

\begin{lemma}\label{l:S}
Let $S$ be a stacked $2$-sphere on $[10] = \{1, 2, \ldots, 10\}$ and $S=\partial(\overline{\sigma}_1,$ $\dots,$ $\overline{\sigma}_7)$ 
where $\sigma_i=\{\alpha_i,\beta_i,\gamma_i,i+3\}$ for $\alpha_i,\beta_i, \gamma_i < i+3$.
For every subset $L \subseteq \{1, 2, 3\}$ of size 2, 
there are transversals $T_1,T_2$ of $S$ with size at most $4$ such that $T_1\cap L, T_2\cap L \neq \emptyset$ and $|\sigma_7 \cap (T_1\cup T_2)| \ge 3$.
\end{lemma}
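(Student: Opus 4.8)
The natural plan is to peel off the last stacking and reduce to the $9$-vertex sphere. Write $S':=\partial(\overline{\sigma}_1,\dots,\overline{\sigma}_6)$, a stacked $2$-sphere on $[9]$, and let $g:=\{\alpha_7,\beta_7,\gamma_7\}$ be the facet of $S'$ to which $\overline{\sigma}_7$ is glued. The facet hypergraph of $S$ is obtained from that of $S'$ by deleting $g$ and inserting the three triangles $g_j:=(g\setminus\{p_j\})\cup\{10\}$, where $g=\{p_1,p_2,p_3\}$. Hence $T\subseteq[10]$ is a transversal of $S$ precisely when $T$ meets every facet of $S'$ except possibly $g$ and $T$ meets $g_1,g_2,g_3$; and for the latter it is enough either that $10\in T$, or that $T$ contains two vertices of $g$. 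In particular, a transversal of $S$ of size at most $4$ can be obtained either as $\{10\}$ together with a transversal of $S'-\{g\}$ of size at most $3$, or as a transversal of $S'$ of size at most $4$ meeting $g$ in at least two vertices.

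To satisfy all three demands of the lemma I would take $T_1$ of the first kind (so $10\in\sigma_7\cap T_1$) and $T_2$ of the second kind (so $T_2\cap g$ supplies two more vertices of $\sigma_7$, whence $|\sigma_7\cap(T_1\cup T_2)|\ge 3$ for free), and only need to route each through $L$. This reduces the lemma --- in the generic situation --- to two statements about $S'$: (A) $S'-\{g\}$ has a transversal of size at most $3$ meeting $L$, and (B) $S'$ has a transversal of size at most $4$ meeting $L$ and meeting $g$ in at least two vertices. I would prove these by the same peeling: a stacked $2$-sphere on more than $4$ vertices has a vertex $v$ of degree $3$ whose link is a triangle, and un-stacking $v$ yields a stacked $2$-sphere on one fewer vertex in which that triangle is a facet; re-stacking $v$ adds one vertex to the transversal unless the current transversal already meets the link of $v$ in two vertices, in which case it is free. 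Iterating down to a stacked $2$-sphere on at most $6$ vertices --- of which there are only finitely many, each dispatched by inspection (a transversal of size $\le 2$ meeting any prescribed pair among the initial four vertices) --- and then re-stacking, one must stay within the budget $3$, resp.\ $4$.

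The crux, and essentially all of the work, is this budget bookkeeping. Three (or four) transversal vertices must pay for six stackings, so one has to show that enough re-stackings are free, i.e.\ that the running transversal can be steered so as to doubly cover the relevant links, and this must be done while simultaneously respecting the $L$-constraint and, for (B), the requirement that $g$ be hit twice --- demands that can conflict when the attaching face $g$ of $\overline{\sigma}_7$ is ``deep'' in the stacking or contains a degree-$3$ vertex, or when $g$ is disjoint from $L$, and in such cases $T_1$ and $T_2$ should be chosen jointly rather than independently. I therefore expect the argument to be a case analysis on the shape of the dual tree of $S$ (equivalently, the order and attaching faces of $\overline{\sigma}_1,\dots,\overline{\sigma}_7$), distinguishing how far $g$ and the vertices of $L$ lie from the initial tetrahedron $\overline{\sigma}_1$, with the degenerate configurations --- e.g.\ $\overline{\sigma}_7$ glued to a face of $\overline{\sigma}_1$, or $g\cap L\neq\emptyset$ --- treated separately. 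The combinatorial inputs themselves (the behavior of the facet hypergraph under stacking, the existence of degree-$3$ vertices) are elementary; the difficulty is purely in keeping the three conditions true together throughout the recursion.
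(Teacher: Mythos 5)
There is a genuine gap: your reduction rests on claim (A) --- that $S'-\{g\}$ always admits a transversal of size at most $3$ meeting $L$ --- and this claim is false. Take the ``staircase'' sphere $\sigma_i=\{i,i+1,i+2,i+3\}$ for $i\in[7]$, so that $S'=\partial(\overline{\sigma}_1,\dots,\overline{\sigma}_6)$ has the $13$ facets $123,124,134,235,245,346,356,457,467,568,578,679,689$ after deleting $g=\{7,8,9\}$. Covering $\{123,124,134\}$ forces $1\in T$ or two vertices of $\{2,3,4\}$; in the latter case the single remaining vertex would have to lie in $\{6,7,9\}\cap\{6,8,9\}=\{6,9\}$ and in $\{5,6,8\}\cap\{5,7,8\}=\{5,8\}$ simultaneously, which is impossible; in the former case one checks the only completion is $\{1,5,6\}$. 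So $\{1,5,6\}$ is the \emph{unique} $3$-transversal of $S'-\{g\}$, and it misses $L=\{2,3\}$ (a legitimate input, being a $2$-subset of the free face $\{1,2,3\}$ of $\overline{\sigma}_1$). Consequently the only $4$-transversal of $S$ containing the vertex $10$ is $\{1,5,6,10\}$, which also misses $L$: your ``first kind'' $T_1$ simply does not exist here. The lemma still holds for this sphere (e.g.\ $T_1=\{2,3,7,8\}$ and $T_2=\{3,4,8,9\}$ work), but both transversals must avoid $10$ and each must contain two vertices of $g$, a configuration your outline relegates to an unspecified ``chosen jointly'' fallback. Since this failure occurs already for the most natural linear stacked sphere, it is not a degenerate edge case but the typical situation your argument must handle, and no argument is given for it.

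Beyond this, the proposal is a plan rather than a proof: you explicitly defer ``essentially all of the work'' (the budget bookkeeping in the un-stacking recursion, and the joint choice of $T_1,T_2$ in the conflicting cases) without carrying any of it out, and claim (B) is likewise unverified. For comparison, the paper avoids the end-peeling recursion entirely: it anchors both transversals at the middle simplex via the edge $e_0=\sigma_3\cap\sigma_4\cap\sigma_5$, which already covers $\partial(\overline{\sigma}_3,\overline{\sigma}_4,\overline{\sigma}_5)$, and then extends by a single vertex toward each end, with two short exceptional cases handled by explicit alternative pairs. If you want to salvage your approach, you would need to replace (A) by a disjunctive statement (either a $3$-transversal of $S'-\{g\}$ meeting $L$ exists, or two suitable $4$-transversals of $S'$ each meeting $g$ twice and jointly covering $g$ exist) and prove that disjunction --- which is essentially where all the case analysis lives.
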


We first show Theorem~\ref{thm:upper} using Lemma~\ref{l:S}.
\begin{proof}[Proof of Theorem~\ref{thm:upper}]
Let $S=\partial(\overline{\sigma}_1,\dots,\overline{\sigma}_{n-3})$ for some $3$-simplices $\overline{\sigma}_1,\dots,\overline{\sigma}_{n-3}$.
We prove Theorem~\ref{thm:upper} by showing the following stronger statement:
\\
\textit{There are transversals $T_1,T_2$ of $S$ with size at most $\left\lceil\frac{3}{7}n\right\rceil$ such that $T_1 \cup T_2$ contains at least $3$ vertices of $\sigma_{n-3}$.}

We proceed by induction on $n$.
We first check the case when $4 \le n \le 10$.
We show the following by induction on $n$ for $4 \le n \le 10$: \\
There are transversals $T_1,T_2$ with size at most $\left\lfloor\frac{n}{2}\right\rfloor$ such that $T_1 \cup T_2$ contains at least $3$ vertices of $\sigma_{n-3}$.

It is obvious when $n=4$ since any vertex subset of size $2$ is a transversal.
When $n=5$, let $\sigma_1=\{a_1,a_2,a_3,a_4\}$ and $\sigma_2=\{a_2,a_3,a_4,a_5\}$ for some vertices $a_1,\dots,a_5$.
Then $T_1=\{a_2,a_3\}$ and $T_2=\{a_3,a_4\}$ are transversals of $S$ that we wanted.

Suppose $6 \le n \le 10$.
Let $\tilde{S}=\partial(\overline{\sigma}_1,\dots,\overline{\sigma}_{n-5})$.
Then $\tilde{S}$ is a stacked $2$-sphere on $n-2$ vertices.
Let $\sigma_{n-4}=\{v_1,v_2,v_3,v_4\}$ and $\sigma_{n-3}=\{v_2,v_3,v_4,v_5\}$ for some vertices $v_1,\dots,v_5$.
We can assume that $\overline\sigma_{n-4}$ is attached to $\overline\sigma_{n-5}$ along $\{v_1,v_2,v_3\}$.
By the induction hypothesis, there are transversals $T_1,T_2$ of $\tilde{S}$ with size at most $\left\lfloor\frac{n}{2}\right\rfloor-1$ such that $T_1\cup T_2$ contains at least $2$ elements of $\{v_1,v_2,v_3\}$.

\emph{Case 1.} $v_1 \in T_1 \cup T_2$. 
By symmetry, we may assume $\{v_1, v_2\} \subseteq T_1 \cup T_2$ and $v_1 \in T_1$ and $v_2 \in T_2$.
Then $T_1\cup\{v_5\}$ and $T_2\cup \{v_4\}$ are transversals of $S$ and $T_1\cup T_2\cup\{v_4,v_5\}$ contains at least $3$ vertices of $\sigma_{n-3}$.
\smallskip

\emph{Case 2.} $v_1 \notin T_1 \cup T_2$.
It implies $\{v_2, v_3\} \subseteq T_1 \cup T_2$. We can assume $v_2 \in T_1$ and $v_3 \in T_2$.
Then $T_1 \cup \{v_4\}$ and $T_2 \cup \{v_4\}$ are transversals of $S$ and $T_1\cup T_2\cup\{v_4\}$ contains at least $3$ vertices of $\sigma_{n-3}$.
\smallskip

Thus, in both cases, we found two transversals of $S$ with size at most $\left\lfloor\frac{n}{2}\right\rfloor$.
Note that $\left\lfloor\frac{n}{2}\right\rfloor \le \left\lceil\frac{3}{7}n\right\rceil$ for $4 \le n \le 10$.
Therefore, when $4 \le n \le 10$, there are transversals $T_1,T_2$ of $S$ with size at most $\left\lceil\frac{3}{7}n\right\rceil$ such that $T_1\cup T_2$ contains at least $3$ vertices of $\sigma_{n-3}$.

Now, suppose $n >10$.
Let $S_0=\partial(\overline{\sigma}_1,\dots,\overline{\sigma}_{n-10})$.
Then $S_0$ is a stacked $2$-sphere on $n-7$ vertices.
Thus, by the induction hypothesis, we can take a transversal $T_1,T_2$ of $S_0$ with size at most $\left\lceil\frac{3}{7}n\right\rceil-3$ such that $T_1\cup T_2$ contains at least $3$ vertices of $\sigma_{n-10}$.

Now let $S_1=\partial(\overline{\sigma}_{n-9},\dots,\overline{\sigma}_{n-3})$.
Then $S_1$ is a stacked $2$-sphere on $10$ vertices, and $S$ is obtained by gluing $S_0$ and $S_1$ along the face $\sigma_{n-9}\cap\sigma_{n-10}$.
Since $T_1\cup T_2$ contains at least $3$ vertices of $\sigma_{n-10}$, $T_1\cup T_2$ contains at least $2$ vertices of $\sigma_{n-9}\cap\sigma_{n-10}$.
Let $L$ be a subset of $(T_1\cup T_2) \cap \sigma_{n-9}\cap\sigma_{n-10}$ with size $2$.
Then by Lemma~\ref{l:S}, there are transversals $W_1,W_2$ of $S_1$ with at most $4$ vertices such that $W_1 \cap L,W_2\cap L \neq \emptyset$ and $|\sigma_{n-3} \cap (W_1\cup W_2)| \ge 3$.
Since $W_1 \cap L,W_2\cap L \neq \emptyset$, we know that $W_1 \cap T_{i_1} \neq \emptyset$ and $W_2 \cap T_{i_2} \neq \emptyset$ for some $i_1,i_2 \in \{1,2\}$.
Then $W_1 \cup T_{i_1}$ and $W_2 \cup T_{i_2}$ are transversals of $S$ with size at most $\left\lceil\frac{3}{7}n\right\rceil$ and $W_1 \cup W_2 \cup T_{i_1} \cup T_{i_2}$ contains at least $3$ vertices of $\sigma_{n-3}$.
This completes the proof.
\end{proof}

Now it remains to prove Lemma~\ref{l:S}.
\begin{proof}[Proof of Lemma~\ref{l:S}]

    Consider the middle 3-simplex $\overline{\sigma}_4$, and let $\sigma_4=\{w_1, w_2, w_3,$ $w_4\}$. Two 3-simplices $\overline{\sigma}_3$ and $\overline{\sigma}_5$ are glued to $\overline{\sigma}_4$, and assume that $\sigma_3 \cap \sigma_4 =$ $\{w_1,$ $w_2,$ $w_3\}$ and $\sigma_4 \cap \sigma_5 = \{w_2, w_3, w_4\}$. Note that the edge $e_0 = \{w_2, w_3\}$ transverses all facets of the stacked $2$-sphere $\partial(\overline{\sigma}_3, \overline{\sigma}_4,\overline{\sigma}_5)$. We try to find as many transversals of the form $T = e_0 \cup \{u, v\}$ as possible,
    where $u \in \sigma_1 \cup \sigma_2$ and $v\in \sigma_6 \cup \sigma_7$. If we cannot find two promised $T_1, T_2$ of this form, then we analyze the structure of $S$ and find them in alternative ways.

    Denote $\sigma_4 = \{w_1, w_2, w_3, w_4\}$ as above, and let $\sigma_1 = \{u_1, u_2, u_3, u_4\}$, $\sigma_2 = \{u_2, u_3,$ $ u_4, u_5\}$, $\sigma_7 = \{v_1, v_2, v_3, v_4\}$ and $\sigma_6 = \{v_2, v_3, v_4, v_5\}$ (see Figure~\ref{fig_3}). Note that some $u, v$ and $w$'s can be repeated. Without loss of generality, we may assume $\{u_1, u_2, u_3\} = \{1, 2, 3\}$ so that $L \subseteq \{u_1, u_2, u_3\}$.
\begin{figure}[htbp]
    \centering
    \includegraphics[scale=0.3]{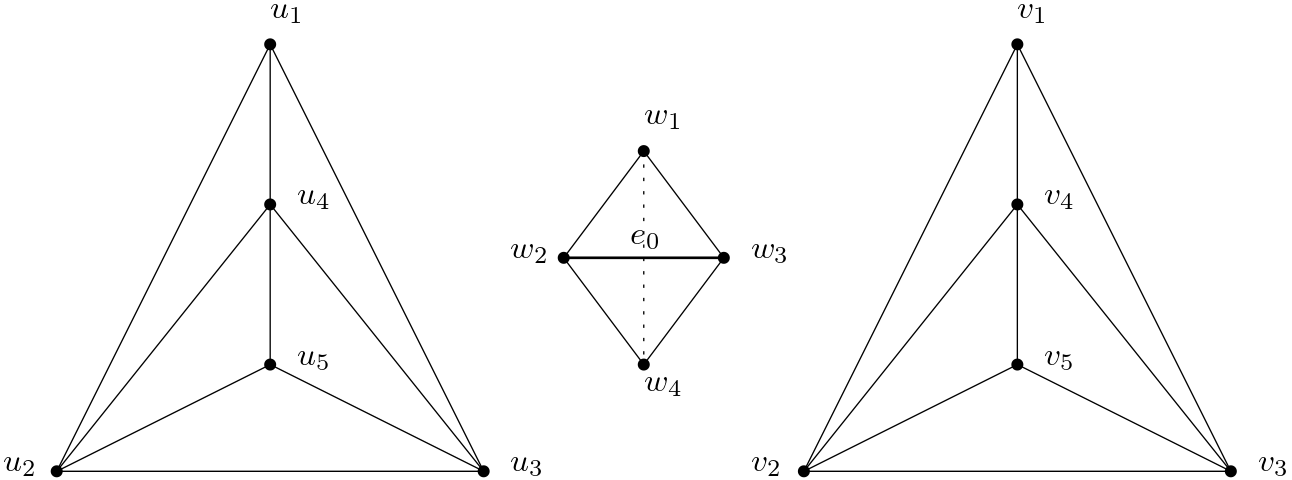}
    \caption{$\overline{\sigma}_1 \cup \overline{\sigma}_2$(left), $\overline{\sigma}_4$(middle) and $\overline{\sigma}_6 \cup \overline{\sigma}_7$(right)}
    \label{fig_3}
\end{figure}

    By the definition of $e_0 = \{w_2, w_3\}$, we have $e_0 \subseteq \sigma_3$. Since $\size{\sigma_3 \setminus \sigma_2} = 1$, $\sigma_2$ contains an element of $e_0$.
    \begin{obs}\label{o:sigma1and2}
        Unless $e_0 \cap \sigma_2 = \{u_5\}$ and $L = \{u_2, u_3\}$, we can select some $u \in \sigma_1 \cup \sigma_2$ so that $e_0 \cup \{u\}$ intersects with all facets of the stacked $2$-sphere $\partial(\overline{\sigma}_1, \overline{\sigma}_2)$ and $(e_0 \cup \{u\}) \cap L \neq \emptyset$.
    \end{obs}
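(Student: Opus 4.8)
To prove Observation~\ref{o:sigma1and2}, the plan is a finite case analysis organised by the position of the edge $e_0=\{w_2,w_3\}$ relative to the five vertices $u_1,\dots,u_5$ of $\sigma_1\cup\sigma_2$. As preparation, note that $\overline{\sigma}_1\cup\overline{\sigma}_2$ is a bipyramid over the triangle $\{u_2,u_3,u_4\}=\sigma_1\cap\sigma_2$ with apexes $u_1\in\sigma_1\setminus\sigma_2$ and $u_5\in\sigma_2\setminus\sigma_1$, so $u_1,\dots,u_5$ are pairwise distinct and the facets of $\partial(\overline{\sigma}_1,\overline{\sigma}_2)$ are exactly the six triangles $\{u_1,u_i,u_j\}$ and $\{u_5,u_i,u_j\}$ where $\{i,j\}$ is a two-element subset of $\{2,3,4\}$. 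Since $\overline{\sigma}_3$ is glued to $\overline{\sigma}_2$ along a triangle we have $|\sigma_3\setminus\sigma_2|=1$, and as $e_0\subseteq\sigma_3$ this already gives $e_0\cap\sigma_2\neq\emptyset$; in particular $e_0\cap\{u_1,\dots,u_5\}$ is nonempty and, when it is a pair, it meets $\{u_2,u_3,u_4,u_5\}$. A direct check on the six facets shows that the two-element subsets of $\{u_1,\dots,u_5\}$ transversing $\partial(\overline{\sigma}_1,\overline{\sigma}_2)$ are precisely $\{u_1,u_5\}$ and the three pairs contained in $\{u_2,u_3,u_4\}$, and that no single vertex transverses it (each $u_i$ misses at least two facets).

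If $e_0$ already transverses $\partial(\overline{\sigma}_1,\overline{\sigma}_2)$, equivalently $e_0\cap\{u_1,\dots,u_5\}$ is one of those four pairs, then I simply take $u\in L$: this is legitimate since $L\subseteq\{u_1,u_2,u_3\}\subseteq\sigma_1\cup\sigma_2$, and then $e_0\cup\{u\}$ transverses $\partial(\overline{\sigma}_1,\overline{\sigma}_2)$ and meets $L$. Otherwise $e_0$ misses a facet, and I split into two families. If $e_0\cap\{u_1,\dots,u_5\}$ is a non-transversal pair --- one of $\{u_1,u_2\},\{u_1,u_3\},\{u_1,u_4\},\{u_2,u_5\},\{u_3,u_5\},\{u_4,u_5\}$ --- then exactly one facet $F$ is missed, and one checks in each case that $F\cap\{u_1,u_2,u_3\}\supseteq L$ whenever $e_0\cap L=\emptyset$; hence $u\in F\cap L$ works if $e_0\cap L=\emptyset$, and any $u\in F$ works otherwise. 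The only input needed here is that $|L|=2$ and $L\subseteq\{u_1,u_2,u_3\}$, so $L$ meets every two-element subset of $\{u_1,u_2,u_3\}$. If instead $|e_0\cap\{u_1,\dots,u_5\}|=1$, say $e_0\cap\{u_1,\dots,u_5\}=\{u_i\}$ with $i\in\{2,3,4,5\}$ (the value $i=1$ is impossible because $u_1\notin\sigma_2$), the other endpoint of $e_0$ lies outside $\sigma_1\cup\sigma_2$ and hence outside $L$; the missed facets are $\{u_1\}\cup(\{u_2,u_3,u_4\}\setminus\{u_i\})$ and $\{u_5\}\cup(\{u_2,u_3,u_4\}\setminus\{u_i\})$ when $i\in\{2,3,4\}$ --- whose common vertices $\{u_2,u_3,u_4\}\setminus\{u_i\}$ include an element of $\{u_1,u_2,u_3\}$, leaving room to meet $L$ --- and all three $u_1$-triangles when $i=5$.

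The case $i=5$ is the only one that can fail, and it is the source of the exceptional hypothesis: here $e_0\cap\sigma_2=\{u_5\}$, the three missed facets $\{u_1,u_i,u_j\}$ have $u_1$ as their only common vertex, so one is forced to take $u=u_1$; then $e_0\cup\{u_1\}$ meets $\{u_1,u_2,u_3\}$ only in $u_1$, so $(e_0\cup\{u\})\cap L\neq\emptyset$ holds precisely when $u_1\in L$, i.e. when $L\neq\{u_2,u_3\}$. (When moreover $u_1\in e_0$, that is $e_0=\{u_1,u_5\}$, the pair $e_0$ already transverses $\partial(\overline{\sigma}_1,\overline{\sigma}_2)$ and $u\in L$ suffices, so the genuine failure is confined to $e_0\cap\{u_1,\dots,u_5\}=\{u_5\}$ with $L=\{u_2,u_3\}$, consistent with the stated exception.) I expect this last case to be the only genuinely delicate point --- one must verify that no vertex other than $u_1$ simultaneously meets all three missed facets, so that the obstruction is intrinsic and not an artifact of the argument --- while the remaining cases reduce to a mechanical verification over the six facets of $\partial(\overline{\sigma}_1,\overline{\sigma}_2)$ together with the elementary fact that any two $2$-subsets of a $3$-set intersect.
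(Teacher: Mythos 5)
Your argument is correct: it verifies the same key facts the paper uses (that $e_0\cap\sigma_2\neq\emptyset$, that the transversal pairs of $\partial(\overline{\sigma}_1,\overline{\sigma}_2)$ are exactly $\{u_1,u_5\}$ and the $2$-subsets of $\{u_2,u_3,u_4\}$, and that $L$ meets every $2$-subset of $\{u_1,u_2,u_3\}$ by pigeonhole), and your case analysis is exhaustive, correctly isolating $e_0\cap\{u_1,\dots,u_5\}=\{u_5\}$ with $L=\{u_2,u_3\}$ as the only obstruction. This is essentially the paper's proof, just organized by $|e_0\cap\{u_1,\dots,u_5\}|$ rather than by which vertex of $\sigma_2$ lies in $e_0$, and carried out in somewhat more exhaustive detail.
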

    \begin{proof}[Proof of Observation~\ref{o:sigma1and2}]
        Recall that $\sigma_2 = \{u_2, u_3,$ $ u_4, u_5\}$. We divide cases up to the vertices in $e_0 \cap \sigma_2$.
        \smallskip

        \emph{Case 1.} $u_2 \in e_0 \cap \sigma_2$. Since $\{u_2, u_3\}$ intersects with all facets of $\partial(\overline{\sigma}_1, \overline{\sigma}_2)$ and intersects with $L$ (by the pigeonhole principle), $u = u_3$ is the desired choice.
        \smallskip

        \emph{Case 2.} $u_3 \in e_0 \cap \sigma_2$. As in Case 1, select $u = u_2$.
        \smallskip

        \emph{Case 3.} $u_4 \in e_0 \cap \sigma_2$. Recall that $L \cap \{u_2, u_3\} \neq \emptyset$. Since both $\{u_2, u_4\}$ and $\{u_3, u_4\}$ intersect with every facet of $\partial(\overline{\sigma}_1, \overline{\sigma}_2)$, $u$ can be chosen from any element in $L \cap \{u_2, u_3\}$.
        \smallskip

        \emph{Case 4.} $u_5 \in e_0 \cap \sigma_2$ and $L \neq \{u_2, u_3\}$. The condition on $L$ implies $u_1 \in L$. Choose $u = u_1$.
        \renewcommand{\qedsymbol}{$\blacksquare$}
    \end{proof}

    Except for the case that $e_0 \cap \sigma_2 = \{u_5\}$ and $L = \{u_2, u_3\}$, we have chosen $e_0 \cup \{u\}$ which intersects with every facet of $\partial(\overline{\sigma}_1,\dots,\overline{\sigma}_5)$. Now we turn our attention to choose $v$ in several ways from $\sigma_6 \cup \sigma_7$, yielding the desired transversals $T_1$ and $T_2$. Except for one case, this is done in the following observation which is very similar to Observation~\ref{o:sigma1and2}. First, note that $\sigma_6$ contains an element of $e_0$ by the same argument before Observation~\ref{o:sigma1and2}.

    \begin{obs}\label{o:sigma6and7}
        Unless $e_0 \cap \sigma_6 = \{v_5\}$, there are two sets $W_1,W_2$ of the form $e_0 \cup \{v\}$ where $v \in \sigma_6\cup\sigma_7$ such that each of $W_1,W_2$ intersects with all facets of $\partial(\overline{\sigma}_6,\overline{\sigma_7})$ and $W_1 \cup W_2$ contains at least three vertices of $\sigma_7$.
    \end{obs}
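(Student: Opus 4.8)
The plan is a short case analysis on which vertex of $\sigma_6$ lies in $e_0$; recall it was already observed that $e_0 \cap \sigma_6 \neq \emptyset$. The first step is to record the facet list of the auxiliary stacked $2$-sphere $\partial(\overline{\sigma}_6, \overline{\sigma}_7)$ on the vertices $v_1, \ldots, v_5$. Since $\overline{\sigma}_7$ is attached to $\overline{\sigma}_6$ along $\{v_2, v_3, v_4\}$, this sphere is the boundary of a triangular bipyramid with apexes $v_1$ and $v_5$; its six facets are $\{v_1, v_2, v_3\}$, $\{v_1, v_2, v_4\}$, $\{v_1, v_3, v_4\}$, $\{v_2, v_3, v_5\}$, $\{v_2, v_4, v_5\}$, $\{v_3, v_4, v_5\}$, and the gluing triangle $\{v_2, v_3, v_4\}$ is \emph{not} among them. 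The one consequence I want is that each of the three pairs $\{v_2, v_3\}$, $\{v_2, v_4\}$, $\{v_3, v_4\}$ is by itself a transversal of $\partial(\overline{\sigma}_6, \overline{\sigma}_7)$: a non-apex vertex $v_i$ (for $i \in \{2,3,4\}$) misses exactly the two facets $\{v_1\} \cup (\{v_2,v_3,v_4\} \setminus \{v_i\})$ and $\{v_5\} \cup (\{v_2,v_3,v_4\} \setminus \{v_i\})$, and either of the two remaining non-apex vertices lies on both.

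With this in hand the conclusion is immediate. If $e_0 \cap \sigma_6$ contains some $v_j$ with $j \in \{2,3,4\}$, write $\{p, q\} = \{2,3,4\} \setminus \{j\}$ and set $W_1 = e_0 \cup \{v_p\}$ and $W_2 = e_0 \cup \{v_q\}$. These are of the required form with $v \in \{v_2, v_3, v_4\} = \sigma_6 \cap \sigma_7$; each of $W_1, W_2$ contains one of the transversal pairs $\{v_j, v_p\}$, $\{v_j, v_q\}$, hence is a transversal of $\partial(\overline{\sigma}_6, \overline{\sigma}_7)$; and $W_1 \cup W_2 \supseteq \{v_j, v_p, v_q\} = \{v_2, v_3, v_4\}$, which is three vertices of $\sigma_7$. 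The only way to avoid this case is that $e_0$ meets $\sigma_6 = \{v_2, v_3, v_4, v_5\}$ only at $v_5$; together with $e_0 \cap \sigma_6 \neq \emptyset$ this means $e_0 \cap \sigma_6 = \{v_5\}$, precisely the case excluded from the statement.

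I don't expect a genuine obstacle. The two things that need a little care are getting the facet list of $\partial(\overline{\sigma}_6, \overline{\sigma}_7)$ right — in particular that the gluing triangle $\{v_2, v_3, v_4\}$ is internal rather than a facet — and checking that no accidental coincidences among $v_1, \ldots, v_5$ occur; but since $\sigma_6$ and $\sigma_7$ are $3$-simplices overlapping in the $3$-set $\{v_2, v_3, v_4\}$, the five vertices $v_1, \ldots, v_5$ are automatically distinct (even though some may coincide with the $u$'s or $w$'s). The rest repeats the bookkeeping already carried out in Observation~\ref{o:sigma1and2}.
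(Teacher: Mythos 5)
Your proposal is correct and follows essentially the same route as the paper: the paper's proof is exactly the three-way case split on which of $v_2,v_3,v_4$ lies in $e_0\cap\sigma_6$, choosing $e_0\cup\{v_p\}$ and $e_0\cup\{v_q\}$ for the other two. Your version merely makes explicit the bipyramid facet list and the fact that any two of $\{v_2,v_3,v_4\}$ form a transversal of $\partial(\overline{\sigma}_6,\overline{\sigma}_7)$, which the paper leaves implicit.
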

    \begin{proof}
        Again, we divide cases up to the vertices in the intersection $e_0 \cap \sigma_6$.
        \smallskip

        \emph{Case 1.} $v_2 \in e_0 \cap \sigma_6$. $e_0 \cup \{v_3\}$ and $e_0 \cup \{v_4\}$ are the two desired sets.
        \smallskip

        \emph{Case 2.} $v_3 \in e_0 \cap \sigma_6$. $e_0 \cup \{v_2\}$ and $e_0 \cup \{v_4\}$ are the two desired sets.
        \smallskip

        \emph{Case 3.} $v_4 \in e_0 \cap \sigma_6$. $e_0 \cup \{v_2\}$ and $e_0 \cup \{v_3\}$ are the two desired sets.
        \renewcommand{\qedsymbol}{$\blacksquare$}
    \end{proof}

    So far we have shown the lemma except two cases: first when $e_0 \cap \sigma_2 = \{u_5\}$ and $L = \{u_2, u_3\}$, second when $e_0 \cap \sigma_6 = \{v_5\}$. The following observation turns out to be useful when we handle the remaining cases.

    \begin{obs}\label{o:sigma1234}
        If $e_0 \cap \sigma_2 = \{u_5\}$, then $(\sigma_2 \cap \sigma_3) \setminus \{u_5\}$ has size two and transverses all facets of $\partial(\overline{\sigma}_1,\dots,\overline{\sigma}_4)$ but $\{w_2, w_3, w_4\}$. Moreover, it intersects with $L = \{u_2, u_3\}$.
    \end{obs}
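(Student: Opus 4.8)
The plan is to turn the hypothesis $e_0\cap\sigma_2=\{u_5\}$ into a concrete description of the three consecutive simplices $\overline{\sigma}_2,\overline{\sigma}_3,\overline{\sigma}_4$, and then to list the facets of $\partial(\overline{\sigma}_1,\dots,\overline{\sigma}_4)$ explicitly and check each of them against the $2$-set $(\sigma_2\cap\sigma_3)\setminus\{u_5\}$.

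First I would pin down the local structure. Since $e_0=\{w_2,w_3\}\subseteq\sigma_3\cap\sigma_4\subseteq\sigma_3$, we have $e_0\cap\sigma_2=e_0\cap(\sigma_2\cap\sigma_3)$, so the hypothesis forces $u_5\in\sigma_2\cap\sigma_3$; as $\sigma_2\cap\sigma_3$ is a $2$-face, $(\sigma_2\cap\sigma_3)\setminus\{u_5\}$ has size exactly two, and it is a $2$-subset of $\sigma_2\setminus\{u_5\}=\{u_2,u_3,u_4\}$, hence contains at least one of $u_2,u_3$ --- which is the ``moreover'' clause in the case $L=\{u_2,u_3\}$. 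Write $\{a,b\}:=(\sigma_2\cap\sigma_3)\setminus\{u_5\}$ and let $p$ be the third vertex of $\{u_2,u_3,u_4\}$. The remaining vertex of $e_0$ lies in $\sigma_3$ but not in $\sigma_2$, hence it is the apex $c$ of $\sigma_3$; so $e_0=\{u_5,c\}$ and $\sigma_3=\{u_5,a,b,c\}$. Since $\sigma_3\cap\sigma_4$ is a $2$-face of $\sigma_3$ containing $e_0$, after swapping the names $a,b$ if necessary we may assume $\sigma_3\cap\sigma_4=\{u_5,c,a\}$; writing $w_4$ for the apex of $\sigma_4$ we then have $\{w_2,w_3,w_4\}=\{u_5,c,w_4\}$, which is disjoint from $\{u_2,u_3,u_4\}\supseteq\{a,b\}$ because $u_5,c,w_4$ are the apices added at the three final stacking steps.

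Next I would build the facet list of $\partial(\overline{\sigma}_1,\dots,\overline{\sigma}_4)$ by running the stacking: gluing $\overline{\sigma}_2$ along $\{u_2,u_3,u_4\}=\{a,b,p\}$ deletes that triangle and adds $\{a,b,u_5\},\{a,p,u_5\},\{b,p,u_5\}$; gluing $\overline{\sigma}_3$ along $\{a,b,u_5\}$ deletes it and adds $\{a,b,c\},\{a,u_5,c\},\{b,u_5,c\}$; gluing $\overline{\sigma}_4$ along $\{a,u_5,c\}$ deletes it and adds $\{a,u_5,w_4\},\{a,c,w_4\},\{u_5,c,w_4\}$. Together with the three surviving triangles $\{u_1,u_2,u_3\},\{u_1,u_2,u_4\},\{u_1,u_3,u_4\}$ of $\partial\overline{\sigma}_1$ this gives all ten facets. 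Finally I would check $\{a,b\}$ against each: the three triangles from $\partial\overline{\sigma}_1$ each contain a $2$-subset of $\{u_2,u_3,u_4\}$ and so meet $\{a,b\}$ (two $2$-subsets of a $3$-set always meet), every facet created by the three gluings contains $a$ or $b$ except the last one, and $\{u_5,c,w_4\}=\{w_2,w_3,w_4\}$ is the unique facet disjoint from $\{u_2,u_3,u_4\}$. That is precisely the assertion. There is no real obstacle; the only care needed is to confirm at each step that the face being glued along is currently a facet --- true since $\{a,b,u_5\}$ is one of the triangles produced by the $\overline{\sigma}_2$-gluing and $\{a,u_5,c\}$ is one of those produced by the $\overline{\sigma}_3$-gluing.
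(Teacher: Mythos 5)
Your proof is correct and takes essentially the same route as the paper: both hinge on deducing from $e_0 \cap \sigma_2 = \{u_5\}$ that the remaining vertex of $e_0$ is the apex of $\sigma_3$ and hence that $w_1 \in \sigma_2 \cap \sigma_3$, so that $(\sigma_2 \cap \sigma_3)\setminus\{u_5\}$ is a $2$-subset of $\{u_2,u_3,u_4\}$ containing $w_1$. The paper then verifies coverage by grouping the facets according to which $\overline{\sigma}_i$ contributes them, whereas you enumerate all ten facets of $\partial(\overline{\sigma}_1,\dots,\overline{\sigma}_4)$ explicitly; this is only a difference in how the final verification is written up.
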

    Note $\{w_2, w_3, w_4\}$ is not a facet of $S$ since $\overline{\sigma}_5$ is attached to $\overline{\sigma}_4$ along that face.
    \begin{proof}[Proof of Observation~\ref{o:sigma1234}]
        Recall that $w_1 \in \sigma_3$, $e_0 \subseteq \sigma_3$ and $\size{\sigma_3 \setminus \sigma_2} = 1$. Since $e_0$ $\cap$ $\sigma_2 $ $= \{u_5\}$, we conclude that $w_1 \in \sigma_2$ and $(\sigma_2 \cap \sigma_3) \setminus \{u_5\}$ intersects with all $2$-subsets of $\sigma_4$ but $\{w_2, w_3, w_4\}$.
        Since $(\sigma_2 \cap \sigma_3)\setminus \{u_5\}$ is either $\{u_2, u_3\}$, $\{u_2, u_4\}$ or $\{u_3, u_4\}$, each of them intersects with all facets of $\partial(\overline{\sigma}_1, \overline{\sigma}_2, \overline{\sigma}_3)$ and $L = \{u_2, u_3\}$.
        \renewcommand{\qedsymbol}{$\blacksquare$}
    \end{proof}

    Hence when $e_0 \cap \sigma_2 = \{u_5\}$ and $L = \{u_2, u_3\}$, it lefts to choose two vertices that transverse all facets of $\partial(\overline{\sigma}_5, \overline{\sigma}_6, \overline{\sigma}_7)$ but $\{w_2, w_3, w_4\}$. This is covered in the following observation.

    \begin{obs}\label{o:sigma567}
        There are two subsets $S_1, S_2 \subseteq \sigma_6 \cup \sigma_7$ of size two such that $\size{\sigma_7 \cap (S_1 \cup S_2)} \geq 3$ and each $S_i$ transverses all facets of $\partial(\overline{\sigma}_5, \overline{\sigma}_6, \overline{\sigma}_7)$ but $\{w_2, w_3$, $w_4\}$.
    \end{obs}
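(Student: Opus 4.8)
The plan is to enumerate the facets of $\partial(\overline{\sigma}_5, \overline{\sigma}_6, \overline{\sigma}_7)$ explicitly and then simply write down two size-two subsets of $\sigma_6 \cup \sigma_7$ that hit all of them except $\{w_2, w_3, w_4\}$, with a short case split according to how $\overline{\sigma}_7$ is attached to $\overline{\sigma}_6$.

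First I would fix notation for the three simplices. Since $\sigma_4 \cap \sigma_5 = \{w_2, w_3, w_4\}$, the simplex $\sigma_5$ has a fourth vertex $x$ (the vertex of $\sigma_5$ new in the stacking order, so $x \notin \sigma_1 \cup \cdots \cup \sigma_4$), and $\sigma_5 = \{w_2, w_3, w_4, x\}$. As $\overline{\sigma}_6$ is glued to $\overline{\sigma}_5$ along a $2$-face of $\sigma_5$ other than $\{w_2, w_3, w_4\}$, that face contains $x$; write $\sigma_5 \cap \sigma_6 = \{x, p, q\}$, let $r$ be the third element so $\{p, q, r\} = \{w_2, w_3, w_4\}$, and write $\sigma_6 = \{x, p, q, y\}$ with $y$ the new vertex of $\sigma_6$. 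Likewise $\overline{\sigma}_7$ is glued to $\overline{\sigma}_6$ along a $2$-face of $\sigma_6$ other than $\{x, p, q\}$, hence one containing $y$, and I write $\sigma_7 = (\sigma_6 \cap \sigma_7) \cup \{z\}$ with $z$ the new vertex of $\sigma_7$. In particular $x, y, z, w_2, w_3, w_4$ are pairwise distinct and $\sigma_6 \cup \sigma_7 = \{x, p, q, y, z\}$. Tracking which $2$-faces survive the two gluings onto $\partial(\overline{\sigma}_5)$: from $\sigma_5$ one keeps exactly $\{w_2, w_3, w_4\}$, $\{x, p, r\}$, $\{x, q, r\}$ (the face $\{x, p, q\}$ becomes internal when $\overline{\sigma}_6$ is attached), and the remaining facets come from $\sigma_6$ and $\sigma_7$.

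Next I would split into cases according to which $2$-face of $\sigma_6$ equals $\sigma_6 \cap \sigma_7$: it is one of $\{p, q, y\}$, $\{x, p, y\}$, $\{x, q, y\}$, the last two being symmetric under $p \leftrightarrow q$. If $\sigma_6 \cap \sigma_7 = \{p, q, y\}$, then $\sigma_7 = \{p, q, y, z\}$ and the facets of $\partial(\overline{\sigma}_5, \overline{\sigma}_6, \overline{\sigma}_7)$ other than $\{w_2, w_3, w_4\}$ are precisely $\{x, p, r\}, \{x, q, r\}, \{x, p, y\}, \{x, q, y\}, \{p, q, z\}, \{p, y, z\}, \{q, y, z\}$; I would take $S_1 = \{x, z\}$ and $S_2 = \{p, q\}$, check that each meets all seven ($x$ covering the four faces through $x$ and $z$ the other three for $S_1$), and note $S_1 \cup S_2 \supseteq \{p, q, z\} \subseteq \sigma_7$. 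If instead $\sigma_6 \cap \sigma_7 = \{x, p, y\}$, then $\sigma_7 = \{x, p, y, z\}$ and the facets other than $\{w_2, w_3, w_4\}$ are $\{x, p, r\}, \{x, q, r\}, \{x, q, y\}, \{p, q, y\}, \{x, p, z\}, \{x, y, z\}, \{p, y, z\}$; I would take $S_1 = \{x, p\}$ and $S_2 = \{x, y\}$, check that each meets all seven, and note $S_1 \cup S_2 = \{x, p, y\} \subseteq \sigma_7$. In each case $S_1, S_2 \subseteq \sigma_6 \cup \sigma_7$ have size two, transverse every facet of $\partial(\overline{\sigma}_5, \overline{\sigma}_6, \overline{\sigma}_7)$ except possibly $\{w_2, w_3, w_4\}$, and satisfy $\size{\sigma_7 \cap (S_1 \cup S_2)} \ge 3$, which is exactly what the observation asks for.

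The only part needing real attention is the facet bookkeeping: correctly identifying which $2$-faces of $\sigma_5, \sigma_6, \sigma_7$ remain facets of $\partial(\overline{\sigma}_5, \overline{\sigma}_6, \overline{\sigma}_7)$ after the successive gluings, and verifying that the three sub-cases for $\sigma_6 \cap \sigma_7$ are exhaustive. Once the facet lists are pinned down the check of the two displayed pairs is immediate, and no degenerate coincidences interfere because $x, y, z$ are genuinely new vertices, hence distinct from $w_2, w_3, w_4$ and from one another.
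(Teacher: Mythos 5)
Your proof is correct and follows essentially the same route as the paper's: a case analysis on how the three simplices are attached (your three cases for $\sigma_6\cap\sigma_7$ correspond exactly, after relabeling, to the paper's three cases for which $2$-face of $\sigma_5$ equals $\{w_2,w_3,w_4\}$), and in fact your chosen pairs $S_1,S_2$ coincide with the paper's under that relabeling. The only difference is presentational: you enumerate the eight facets explicitly, whereas the paper verifies the covering by inspection of a figure.
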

    \begin{proof}[Proof of Observation~\ref{o:sigma567}]
        Let $\sigma_5 = \{v_2, v_3, v_5, v_6\}$. (See Figure~\ref{fig_4}.) Because $\sigma_6$ is attached to $\sigma_5$ along $\{v_2, v_3, v_5\}$, the facet $\{w_2, w_3, w_4\}$ which may not be covered by $S_1, S_2$ is either $\{v_2, v_3, v_6\}$, $\{v_2, v_5, v_6\}$ or $\{v_3, v_5, v_6\}$. In each case, we choose $S_1$ and $S_2$ satisfying the observation as follows.
        \smallskip

        \emph{Case 1.} $\{w_2, w_3, w_4\} = \{v_2, v_3, v_6\}$. Take $S_1 = \{v_1, v_5\}$ and $S_2 = \{v_2, v_3\}$.
        \smallskip

        \emph{Case 2.} $\{w_2, w_3, w_4\} = \{v_2, v_5, v_6\}$. Take $S_1 = \{v_2, v_3\}$ and $S_2 = \{v_3, v_4\}$.
        \smallskip

        \emph{Case 3.} $\{w_2, w_3, w_4\} = \{v_3, v_5, v_6\}$. This is symmetric to Case 2, and we take $S_1 = \{v_2, v_3\}$ and $S_2 = \{v_2, v_4\}$.
        \renewcommand{\qedsymbol}{$\blacksquare$}
    \end{proof}
\begin{figure}[htbp]
    \centering
    \includegraphics[scale=0.25]{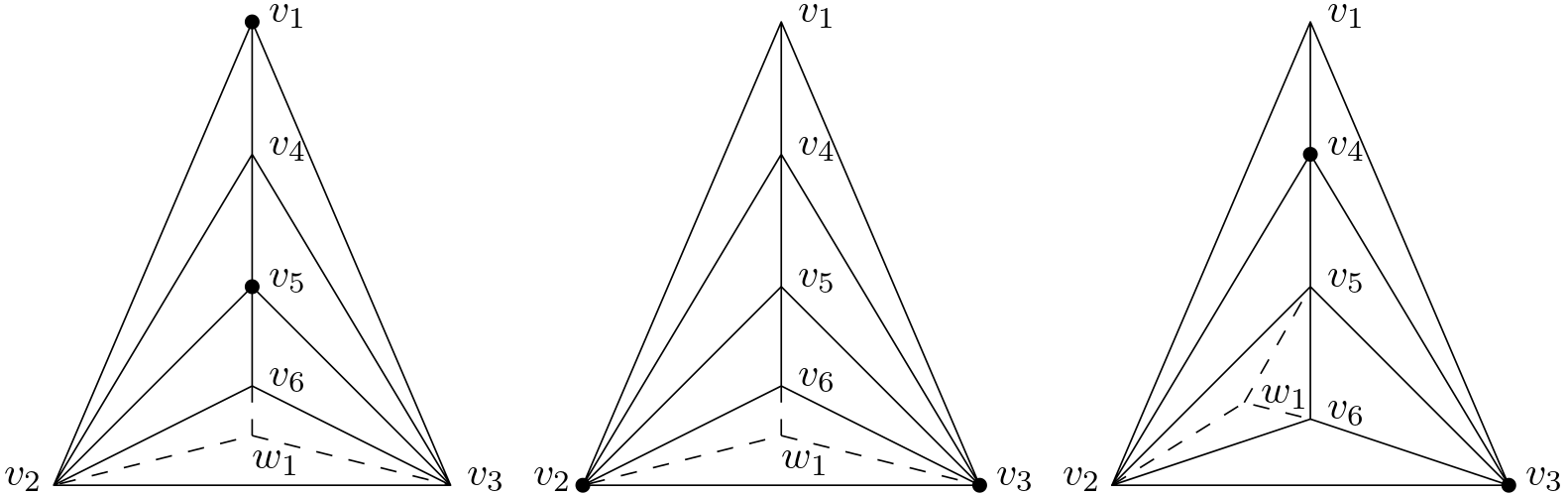}
    \captionsetup{width=.85\linewidth}
    \caption{$\overline{\sigma}_5 \cup \overline{\sigma}_6 \cup \overline{\sigma}_7$ with $\overline{\sigma}_4$ attached along $\{v_2, v_3, v_6\}$ (left, middle) and $\{v_2, v_5, v_6\}$ (right). The marked vertices cover all facets but $\overline{\sigma}_4 \cap \overline{\sigma}_5$.}
    \label{fig_4}
\end{figure}
    Therefore when $e_0 \cap \sigma_2 = \{u_5\}$, $T_i = ((\sigma_2 \cap \sigma_3) \setminus \{u_5\}) \cup S_i$, $i = 1, 2,$ are the desired transversals.

    Finally, we construct $T_1$ and $T_2$ when $e_0 \cap \sigma_6 = \{v_5\}$. Since we already handled the case when $e_0 \cap \sigma_2 = \{u_5\}$ and $L = \{u_2, u_3\}$, we may assume it is not.
    We take the first transversal $T_1 = e_0 \cup \{u, v_1\}$ where $u$ is chosen using Observation~\ref{o:sigma1and2}.
    To construct $T_2$, we apply the same argument of Observation~\ref{o:sigma1234} to $\partial(\overline{\sigma}_4,\dots,\overline{\sigma}_7)$ and $v_i$'s instead, and conclude that $(\sigma_5 \cap \sigma_6) \setminus \{v_5\}$ transverses all facets of $\partial(\overline{\sigma}_4, \dots, \overline{\sigma}_7)$ but $\{w_1, w_2, w_3\}$. Then we use Observation~\ref{o:sigma567} to $\partial(\overline{\sigma}_1, \overline{\sigma}_2, \overline{\sigma}_3)$ to get two subsets $S_1, S_2 \subseteq \sigma_1 \cup \sigma_2$ of size two such that $\size{\sigma_1 \cap (S_1 \cup S_2)} \geq 3$ and each $S_i$ transverses all facets of $\partial(\overline{\sigma}_1, \overline{\sigma}_2, \overline{\sigma}_3)$ but $\{w_1, w_2, w_3\}$. Since $\size{\sigma_1 \cap (S_1 \cup S_2)} \geq 3$ implies that some $S_i$ intersects with $L \subseteq \sigma_1$ which has size two, then $T_2 = ((\sigma_5 \cap \sigma_6) \setminus \{v_5\}) \cup S_i$ is another transversal that we wanted.
\end{proof}

\section*{Acknowledgement}
The authors would like to thank Seunghun Lee for introducing this problem.
The authors also thank Andreas Holmsen and Minki Kim for helpful discussions in the early stage of this work.

\end{document}